\providecommand{\U}[1]{\protect\rule{.1in}{.1in}}
\providecommand{\U}[1]{\protect\rule{.1in}{.1in}}
\newtheorem{theorem}{Theorem}[section]
\newtheorem{proposition}[theorem]{Proposition}
\newtheorem{lemma}[theorem]{Lemma}
\newtheorem{problem}[theorem]{Problem}
\newtheorem{conjecture}[theorem]{Conjecture}
\numberwithin{equation}{section}
\begin{document}
\title[A Gale-Berlekamp permutation-switching problem]{A Gale-Berlekamp permutation-switching problem in higher dimensions}
\author{G. Ara\'ujo}
\address{Departamento de Matem\'{a}tica \\
\indent Universidade Estadual da Para\'{\i}ba \\
\indent 58.429-600 - Campina Grande, Brazil}
\email{gustavoaraujo@cct.uepb.edu.br; gdasaraujo@gmail.com}
\author{D. Pellegrino}
\address{Departamento de Matem\'{a}tica \\
\indent Universidade Federal da Para\'{\i}ba \\
\indent 58.051-900 - Jo\~{a}o Pessoa, Brazil}
\email{pellegrino0q.cnpq.br; dmpellegrino@gmail.com}
\thanks{2010 Mathematics Subject Classification: 91A46, 46A45}
\thanks{D. Pellegrino is supported by R\'{e}sau Franco-Br\'{e}silian en Math\'{e}matiques}
\keywords{Unbalancing lights problem, Gale-Berlekamp switching game}

\begin{abstract}
Let an $n\times n$ array $\left( a_{ij}\right) $ of lights be given, each either on (when $a_{ij}=1$) or off (when $a_{ij}=-1$). For each row and each column there is a switch so that if the switch is pulled ($x_{i}=-1$ for row $i$ and $y_{j}=-1$ for column $j$) all of the 	lights in that line are switched: on to off or off to on. The unbalancing lights problem (Gale-Berlekamp switching game)	consists in maximizing the difference between the lights on and off. We obtain the exact parameters for a generalization of the unbalancing lights problem in higher dimensions. 
\end{abstract}

\maketitle
\section{Introduction}

We begin by presenting a combinatorial game, sometimes called Gale-Berlekamp switching game or unbalancing lights problem (for a presentation we refer, for instance to the classical book of Alon and Spencer \cite{alon}). Let an $n\times n$ array $\left( a_{ij}\right) $ of lights be given, each
either on (when $a_{ij}=1$) or off (when $a_{ij}=-1$). Let us also suppose
that for each row and each column there is a switch so that if the switch is
pulled ($x_{i}=-1$ for row $i$ and $y_{j}=-1$ for column $j$) all of the
lights in that line are switched: on to off or off to on. The problem
consists in maximizing the difference between the lights on and off. 

A probabilistic approach (using the Central Limit Theorem) to this problem
(see \cite{alon}) provides the following asymptotic estimate:

\begin{theorem}[{\protect\cite[Theorem 2.5.1]{alon}}]
Let $a_{ij}=\pm1$ for $1\leq i,j\leq n$. Then there exist $x_{i},y_{j}=\pm1$%
, $1\leq i,j\leq n$, such that 
\begin{equation}
\sum\limits_{i,j=1}^{n}a_{ij}x_{i}y_{j}\geq\left( \sqrt{2/\pi}%
+o(1)\right)n^{3/2},  \label{6hh3}
\end{equation}
and the exponent $3/2$ is optimal. In other words, for any initial
configuration $\left(a_{ij}\right)$ it is possible to perform switches so
that the number of lights on minus the number of lights off is at least $%
\left( \sqrt{2/\pi}+o(1)\right) n^{3/2}$.
\end{theorem}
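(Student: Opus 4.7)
The plan is to use the probabilistic method, choosing the column switches $y_j$ at random and then the row switches $x_i$ deterministically to optimize against them.

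\textbf{First step.} I would let $y_1,\ldots,y_n$ be i.i.d.\ uniform in $\{-1,+1\}$ and, for each row $i$, set
\[
R_i := \sum_{j=1}^{n} a_{ij} y_j.
\]
Since the $a_{ij}$ are fixed $\pm 1$'s, each $R_i$ is a sum of $n$ independent uniform $\pm 1$ random variables (the signs $a_{ij}y_j$ are i.i.d.\ uniform $\pm 1$). Given the $y_j$'s, the optimal choice of $x_i$ is $x_i = \operatorname{sgn}(R_i)$, giving $\sum_{i,j} a_{ij} x_i y_j = \sum_i |R_i|$. The task therefore reduces to lower-bounding $\mathbb{E}\bigl[\sum_i |R_i|\bigr]$ and invoking the probabilistic method to extract a deterministic $(y_j)$ (and hence $(x_i)$) achieving at least this expectation.

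\textbf{Second step.} The key estimate is $\mathbb{E}[|R_i|] \geq (\sqrt{2/\pi}+o(1))\sqrt{n}$. This follows from the Central Limit Theorem: $R_i/\sqrt{n}$ converges in distribution to a standard normal $Z$, and since $|R_i|/\sqrt{n}$ is uniformly integrable (its second moment equals $1$), one has $\mathbb{E}[|R_i|]/\sqrt{n} \to \mathbb{E}[|Z|] = \sqrt{2/\pi}$. Summing over $i$ and applying linearity of expectation gives $\mathbb{E}\bigl[\sum_i |R_i|\bigr] \geq (\sqrt{2/\pi}+o(1))\,n^{3/2}$, so some realization of the $y_j$'s attains at least this value.

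\textbf{Third step (optimality of the exponent).} I would show that no exponent larger than $3/2$ can work by a counting/concentration argument on a random input $(a_{ij})$. If $a_{ij}$ are i.i.d.\ uniform $\pm 1$, then for each fixed choice $(x_i,y_j)\in\{\pm 1\}^{2n}$ the sum $\sum_{i,j} a_{ij}x_iy_j$ is itself a sum of $n^2$ i.i.d.\ uniform $\pm 1$ random variables, so Hoeffding's inequality gives
\[
\Pr\!\Bigl(\,\Bigl|\sum_{i,j}a_{ij}x_iy_j\Bigr|\geq C n^{3/2}\Bigr)\leq 2\exp(-C^2 n/2).
\]
A union bound over the $2^{2n}$ sign patterns $(x,y)$ shows that for $C$ large enough (independent of $n$) the probability that \emph{any} configuration reaches $C n^{3/2}$ is less than $1$, so a witness matrix exists for which the maximum is $O(n^{3/2})$.

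\textbf{Main obstacle.} The conceptually delicate point is the $o(1)$ in the CLT-based estimate: one must justify convergence of the first absolute moment, not merely distributional convergence. This is handled by noting $\mathbb{E}[R_i^2]=n$, which gives the required uniform integrability. Everything else is routine Hoeffding plus a union bound, and I do not expect any serious difficulty beyond this analytic subtlety.
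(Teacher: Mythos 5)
Your argument is exactly the standard proof from Alon and Spencer's book, which is what the paper cites here (the paper itself does not reprove this classical result): choose the column signs uniformly at random, choose $x_i=\operatorname{sgn}(R_i)$, estimate $\mathbb{E}\lvert R_i\rvert\sim\sqrt{2n/\pi}$ via the CLT with uniform integrability (or the exact binomial mean-absolute-deviation formula), and establish optimality of the exponent by Hoeffding plus a union bound over the $2^{2n}$ sign patterns applied to a random matrix. All steps are correct, including the uniform-integrability justification of the $o(1)$.
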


In higher dimensions (cf. mathoverflow.net/questions/59463/unbalancing-lights-in-higher-dimensions, by A. Montanaro) the unbalancing lights problem is stated as follows:

Let an $n\times\cdots\times n$ array $\left(a_{i_{1}\cdots i_{m}}\right)$ of
lights be given each either on (when $a_{i_{1}\cdots i_{m}}=1$) or off (when 
$a_{i_{1}\cdots i_{m}}=-1$). Let us also suppose that for each $i_{j}$ there
is a switch so that if the switch is pulled ($x_{i_{j}}=-1$) all of the
lights in that line are ``switched": on to off or off to on. The goal is to
maximize the difference between the lights on and off.

It is a well known consequence of the Bohnenblust--Hille inequality \cite{bhv} that
there exist $x_{i_{j}}^{(k)}=\pm1$, $1\leq j\leq n$ and $k=1,\ldots,m,$ and
a constant $C\geq1$, such that 
\begin{equation*}
\sum\limits_{i_{1},\ldots,i_{m}=1}^{n}a_{i_{1}\cdots
i_{m}}x_{i_{1}}^{(1)}\cdots x_{i_{m}}^{(m)}\geq\frac{1}{C^{m}}n^{\frac{m+1}{2%
}}
\end{equation*}
and that the exponent $\frac{m+1}{2}$ is sharp. A step further suggested by A. Montanaro is to
investigate if the term $C^{m}$ can be improved. Using recent estimates of
the Bohnenblust--Hille inequality (see \cite{adv}) it is plain that there
exist $x_{i_{j}}=\pm1$, $1\leq j\leq n$ and a constant $C>0$ such that 
\begin{equation}
\sum\limits_{i_{1},\ldots,i_{m}=1}^{n}a_{i_{1}\cdots
i_{m}}x_{i_{1}}^{(1)}\cdots x_{i_{m}}^{(m)}\geq\frac{1}{1.3m^{0.365}}n^{%
\frac{m+1}{2}},  \label{6hh2}
\end{equation}
and the exponent $\frac{m+1}{2}$ is sharp. It is still an open problem if
the term $1.3m^{0.365}$ (here and henceforth $1.3m^{0.365}$ is just a
simplification of $\kappa m^{\frac{2-\log2-\gamma}{2}},$ where $\gamma$ is
the Euler--Mascheroni constant) can be improved to a universal constant.

Some variants of the unbalancing lights problem have been already
investigated (see \cite{br}). In this paper we consider a more general
problem:

\begin{problem}
\label{ttt} Let $\left(a_{i_{1}\cdots i_{m}}\right) $ be an $%
n\times\cdots\times n$ array of (real or complex) scalars such that $%
\left\vert a_{i_{1}\cdots i_{m}}\right\vert =1.$ For $p\in\lbrack1,\infty]$,
maximize 
\begin{equation*}
g(p)=\left\{\sum\limits_{i_{1},\ldots,i_{m}=1}^{n}a_{i_{1}\cdots
i_{m}}x_{i_{1}}^{(1)}\cdots x_{i_{m}}^{(m)} \ : \ \left\Vert ( x_{i}^{(j)})
_{i=1}^{n}\right\Vert _{p}=1\text{ for all }j=1,\ldots,m\right\}.
\end{equation*}
\end{problem}

When $p=\infty$ with real norm-one scalars is precisely the classical unbalancing lights problem
in higher dimensions (\cite{montanaro}).

The main result of this paper, in particular, gives sharp exponents for the unbalancing lights problem for $p\geq 2$:

\begin{itemize}
\item  If $p\in\lbrack2,\infty]$, then 
\begin{equation}
g(p)\geq\frac{1}{1.3m^{0.365}}n^{\frac{mp+p-2m}{2p}}
\label{0000}
\end{equation}
and the exponents $\frac{mp+p-2m}{2p}$ are sharp. 

\end{itemize}

\section{Results}

A first partial solution to Problem \ref{ttt} is a straightforward
consequence of the Hardy--Littlewood inequalities. The Hardy--Littlewood
inequalities \cite{dimant,hl,pra} for $m$--linear forms assert that for any integer $m\geq2$ there exist constants $C_{m,p}^{\mathbb{K}},D_{m,p}^{\mathbb{K}
}\geq1 $ such that
\begin{equation}  \label{poi}
\left\vert 
\begin{array}{l}
\displaystyle \left(\sum_{j_{1},\ldots,j_{m}=1}^{n}\left\vert
T(e_{j_{1}},\ldots,e_{j_{m}})\right\vert ^{\frac{p}{p-m}}\right) ^{\frac{p-m}{p}}\leq D_{m,p}^{\mathbb{K}}\left\Vert T\right\Vert \text{ for }m<p\leq 2m, \\
\displaystyle\left( \sum_{j_{1},\ldots,j_{m}=1}^{n}\left\vert
T(e_{j_{1}},\ldots,e_{j_{m}})\right\vert ^{\frac{2mp}{mp+p-2m}}\right) ^{\frac{mp+p-2m}{2mp}}\leq C_{m,p}^{\mathbb{K}}\left\Vert T\right\Vert \text{for }p\geq2m,
\end{array}
\right.
\end{equation}
for all $m$--linear forms $T:\ell_{p}^{n}\times\cdots\times\ell_{p}^{n}%
\rightarrow\mathbb{K}$, all positive integers $n$.

The optimal constants $C_{m,p}^{\mathbb{K}},D_{m,p}^{\mathbb{K}}$ are
unknown; even the asymptotic behaviour of these constants is unknown. Up to
now, the best estimates for $C_{m,p}^{\mathbb{K}}$ can be found in \cite%
{ara, jfa}:%
\begin{equation*}
C_{m,p}^{\mathbb{K}}\leq\left( \sqrt{2}\right) ^{\frac{2m(m-1)}{p}}\left(
1.3m^{0.365}\right) ^{\frac{p-2m}{p}}.
\end{equation*}
For $p>2m(m-1)^{2}$ we also know from \cite{ara} that $C_{m,p}^{\mathbb{K}%
}\leq1.3m^{0.365}$; it is not known if, in general, the same estimate is
valid for the other choices of $p$. The notation of $C_{m,p}^{\mathbb{K}%
},D_{m,p}^{\mathbb{K}}$ as the optimal constants of the Hardy--Littlewood
inequalities will be kept all along the paper.

By \eqref{poi} we easily have the following:

\begin{proposition}
Let $m,n$ be positive integers and $p\in(m,\infty]$. There are positive
constants $C_{m,p}^{\mathbb{K}},D_{m,p}^{\mathbb{K}}$ such that%
\begin{align*}
g(p) & \geq\frac{1}{D_{m,p}^{\mathbb{K}}}n^{\frac{m\left( p-m\right) }{p}} \text{ for }m<p\leq 2m, \\
g(p) & \geq\frac{1}{C_{m,p}^{\mathbb{K}}}n^{\frac{mp+p-2m}{2p}} \text{ for }p\geq2m.
\end{align*}
\end{proposition}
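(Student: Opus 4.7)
The plan is to realize $g(p)$ as the norm of a naturally associated $m$-linear form, and then read off the two inequalities directly from the Hardy--Littlewood estimates \eqref{poi}.

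First I would define the $m$-linear form $T\colon \ell_p^n\times\cdots\times \ell_p^n\to\mathbb{K}$ by
\begin{equation*}
T(e_{j_1},\ldots,e_{j_m})=a_{j_1\cdots j_m}
\end{equation*}
on the canonical basis and extending multilinearly. By definition
\begin{equation*}
\|T\|=\sup\Bigl\{\Bigl|\textstyle\sum_{i_1,\ldots,i_m=1}^n a_{i_1\cdots i_m}x_{i_1}^{(1)}\cdots x_{i_m}^{(m)}\Bigr|\;:\;\|(x_i^{(j)})_{i=1}^n\|_p=1,\ j=1,\ldots,m\Bigr\}.
\end{equation*}
A quick observation — multiplying, say, $x^{(1)}$ by an appropriate unimodular scalar (a sign in the real case, a phase in the complex case) leaves the $\ell_p^n$-norm unchanged and makes the inner sum real and nonnegative — shows that the supremum coincides with the maximum defining $g(p)$, so $g(p)=\|T\|$.

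Next I would plug $T$ into the two branches of \eqref{poi}. Since $|a_{j_1\cdots j_m}|=1$, the number of nonzero terms on the left-hand sides equals $n^m$, independent of the exponent. A direct computation therefore yields
\begin{equation*}
n^{\frac{m(p-m)}{p}}\leq D_{m,p}^{\mathbb{K}}\,\|T\|\quad\text{for }m<p\leq 2m,\qquad n^{\frac{mp+p-2m}{2p}}\leq C_{m,p}^{\mathbb{K}}\,\|T\|\quad\text{for }p\geq 2m.
\end{equation*}
Rearranging and substituting $\|T\|=g(p)$ finishes the proof.

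There is no substantial obstacle: the only point that requires a moment of care is the identification $g(p)=\|T\|$ in the complex setting, which is handled by the phase-rotation observation above. The arithmetic in the exponents $\tfrac{p-m}{p}\cdot m=\tfrac{m(p-m)}{p}$ and $\tfrac{mp+p-2m}{2mp}\cdot m=\tfrac{mp+p-2m}{2p}$ is routine once one notes that only the mass $n^m$ of the coefficient tensor enters each Hardy--Littlewood sum.
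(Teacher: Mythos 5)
Your proof is correct and matches the paper's intended argument: the paper simply asserts the proposition follows ``By \eqref{poi}'' and your reading of that claim — identify $g(p)$ with $\|T\|$ via a phase/sign rotation, note that unimodularity forces each Hardy--Littlewood sum to equal $n^m$, and simplify the exponents — is exactly the computation they have in mind. No issues.
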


Among other results, the main result of the present paper shows that the above estimates are far from being precise. We will show that:

\begin{itemize}
\item The exponent $\frac{m\left( p-m\right) }{p}$ can be replaced by $\frac{%
mp+p-2m}{2p}$ in the case $m<p\leq 2m;$

\item The constants $\frac{1}{C_{m,p}^{\mathbb{K}}}$ and $\frac{1}{D_{m,p}^{%
\mathbb{K}}}$ can be replaced by $1.3m^{0.365};$

\item The inequality is also valid for $2\leq p\leq m$ with the same
constants and exponents $\frac{%
	mp+p-2m}{2p}$;

\item The above exponents $\frac{%
	mp+p-2m}{2p}$ are optimal.

\end{itemize}

Recently (see \cite{frann}), it has been shown that the constants  $D_{m,p}^{\mathbb{K}}$ have essentially a very low growth but since we now improve the associated exponents, the estimates of $D_{m,p}^{\mathbb{K}}$ are not useful here.
 
To achieve our goals, we begin by revisiting the Kahane--Salem--Zygmund inequality. It is a
probabilistic result that furnishes unimodular multilinear forms with
\textquotedblleft small\textquotedblright\ norms. This result is fundamental
to the proof of the optimality of the exponents of the Hardy--Littlewood
inequality. For $p\geq1$, the Kahane--Salem--Zygmund asserts that there
exists a $m$-linear form $A:\ell_{p}^{n}\times\cdots\times\ell_{p}^{n}%
\longrightarrow\mathbb{K}$ of the form 
\begin{equation*}
A\left( x^{\left( 1\right) },\ldots,x^{\left( m\right) }\right)
=\sum\limits_{i_{1},\ldots,i_{m}=1}^{n}\delta_{i_{1}\cdots
i_{m}}x_{i_{1}}^{\left( 1\right) }\cdots x_{i_{m}}^{\left(m\right)},
\end{equation*}
with $\delta_{i_{1}\cdots i_{m}}\in\{-1,1\},$ such that 
\begin{equation*}
\left\Vert A\right\Vert \leq C_{m}n^{\frac{1}{2}+m\left( \frac{1}{2}-\frac {1%
}{p}\right) }.
\end{equation*}
However, for $1\leq p\leq2$ a better estimate
can essentially be found in \cite{fbayart}. So, we have the following:

\begin{theorem}[Kahane--Salem--Zygmund inequality]
\label{kszi} Let $n,m$ be positive integers and $p\geq1$. Then there exists
a $m$-linear form $A:\ell_{p}^{n}\times\cdots\times\ell_{p}^{n}%
\longrightarrow\mathbb{K}$ of the form 
\begin{equation*}
A\left( x^{\left( 1\right) },\ldots,x^{\left( m\right) }\right) =
\sum\limits_{i_{1},\ldots,i_{m}=1}^{n}\delta_{i_{1}\cdots
i_{m}}x_{i_{1}}^{\left( 1\right) }\cdots x_{i_{m}}^{\left(m\right)},
\end{equation*}
with $\delta_{i_{1}\cdots i_{m}}\in\{-1,1\},$ such that 
\begin{equation*}
\left\Vert A\right\Vert \leq C_{m}n^{\max\left\{ \frac{1}{2}+m\left(\frac{1}{%
2}-\frac{1}{p}\right) ,1-\frac{1}{p}\right\}}.
\end{equation*}
\end{theorem}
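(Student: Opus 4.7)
The plan is to prove the two different exponents in the maximum on their respective ranges of $p$ separately, using the classical probabilistic Kahane--Salem--Zygmund argument on one side and interpolation on the other.

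For $p\geq 2$ I would take $(\delta_{i_1\cdots i_m})$ to be independent Rademacher signs. For any fixed unit vectors $x^{(k)}\in B_{\ell_p^n}$, Khintchine's inequality gives, for each $r\geq 2$,
\[
\bigl(\mathbb{E}\,|A(x^{(1)},\ldots,x^{(m)})|^r\bigr)^{1/r}\leq K_r\Bigl(\sum_{i_1,\ldots,i_m}|x^{(1)}_{i_1}|^2\cdots|x^{(m)}_{i_m}|^2\Bigr)^{1/2}=K_r\prod_{k=1}^m\|x^{(k)}\|_2,
\]
and since $p\geq 2$ we have $\|x^{(k)}\|_2\leq n^{1/2-1/p}\|x^{(k)}\|_p\leq n^{1/2-1/p}$. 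A standard $\varepsilon$-net argument on $B_{\ell_p^n}$ (a net of cardinality at most $(3/\varepsilon)^n$) together with a union bound on high-order moments then produces, with positive probability, a realisation $(\delta_{i_1\cdots i_m})\in\{-1,1\}^{n^m}$ for which $\|A\|\leq C_m\,n^{1/2+m(1/2-1/p)}$. This is the classical KSZ estimate and I would essentially cite it.

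For the range $1\leq p\leq 2$, where the exponent $1-1/p$ dominates, I would instead fix a form $A$ whose signs give the KSZ bound at $p=2$, namely $\|A:\ell_2^n\times\cdots\times\ell_2^n\to\mathbb{K}\|\leq C_m n^{1/2}$, and pair this with the trivial endpoint bound
\[
|A(x^{(1)},\ldots,x^{(m)})|\leq\sum_{i_1,\ldots,i_m}\prod_{k=1}^m|x^{(k)}_{i_k}|=\prod_{k=1}^m\|x^{(k)}\|_1\leq 1,
\]
valid for any $x^{(k)}\in B_{\ell_1^n}$ and any sign choice. Multilinear Riesz--Thorin interpolation between the endpoints $(p_0,M_0)=(1,1)$ and $(p_1,M_1)=(2,C_m n^{1/2})$, with $\theta=2(1-1/p)\in[0,1]$ so that $1/p=(1-\theta)+\theta/2$, then yields
\[
\|A:\ell_p^n\times\cdots\times\ell_p^n\to\mathbb{K}\|\leq M_0^{1-\theta}M_1^{\theta}\leq C_m^{\theta}\,n^{\theta/2}\leq C_m\,n^{1-1/p}.
\]

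A short calculation shows $\tfrac{1}{2}+m(\tfrac{1}{2}-\tfrac{1}{p})\geq 1-\tfrac{1}{p}$ if and only if $p\geq 2$, so the two bounds realise precisely the displayed maximum on their respective subranges, and the theorem follows after absorbing everything into a common constant $C_m$. The only mildly delicate point is invoking multilinear Riesz--Thorin for scalar-valued maps on $\ell_p^n\times\cdots\times\ell_p^n$; this is a standard extension of the classical theorem (the target $\mathbb{K}$ does not vary along the interpolation) and presents no real obstacle. As an alternative to interpolation one could run the direct probabilistic computation for $1\leq p\leq 2$ along the lines of \cite{fbayart}.
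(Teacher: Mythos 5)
Your proof is correct, and it takes a genuinely different route from the paper, which does not really prove this statement: it defers the exponent $\tfrac12+m(\tfrac12-\tfrac1p)$ to the classical probabilistic Kahane--Salem--Zygmund argument and the improved exponent $1-\tfrac1p$ for $1\le p\le 2$ to Bayart's paper on maximum moduli of random polynomials, which (as far as one can tell from the ``can essentially be found'') proceeds by a direct probabilistic computation rather than interpolation. Your handling of $p\geq 2$ is the same classical Rademacher--Khintchine--net argument the paper implicitly invokes, but your treatment of $1\le p\le2$ is new relative to the paper: you fix the sign pattern that achieves the $p=2$ bound, note the trivial estimate $\|A:\ell_1^n\times\cdots\times\ell_1^n\to\mathbb K\|\le 1$ holds for \emph{any} sign pattern, and interpolate the fixed form $A$ via multilinear Riesz--Thorin. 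The computation of $\theta=2(1-1/p)$ and of $C_m^\theta n^{\theta/2}\le C_m n^{1-1/p}$ is correct, as is the verification that the two exponents cross exactly at $p=2$. Two small remarks. First, multilinear Riesz--Thorin is a complex-scalar theorem; for $\mathbb K=\mathbb R$ you should first complexify $A$, which costs only an $m$-dependent factor absorbed into $C_m$, so this is harmless but worth saying. Second, the same interpolation idea actually subsumes your $p\geq 2$ case as well: interpolating the \emph{same} fixed $A$ between the $p=2$ endpoint $C_m n^{1/2}$ and the classical $p=\infty$ endpoint $C_m n^{(m+1)/2}$ (with $\theta=1-2/p$) reproduces $C_m n^{1/2+m(1/2-1/p)}$, so the whole theorem reduces to the three easy endpoints $p=1,2,\infty$ plus complex interpolation --- a cleaner packaging than either your split treatment or the paper's two citations.
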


We shall show that \eqref{poi} can be significantly improved when dealing
with unimodular forms. It is easy to see that our main result is a
consequence of the following theorem (see Figure \ref{ddssdds}).

Before presenting the next result, let us introduce some required definitions for their proof. Let $B_{E^{\ast }}$ be the closed unit ball of the topological dual of $E$. For $s\geq 1$ we represent by $\ell _{s}^{w}(E)$ the linear space of the sequences $\left(x_{j}\right)_{j=1}^{\infty}$ in $E$ such that $\left( \varphi \left( x_{j}\right)\right) _{j=1}^{\infty }\in \ell _{s}$ for every continuous linear functional $\varphi :E\rightarrow \mathbb{K}$. For $(x_j)_{j=1}^{\infty}\in \ell_{s}^w(E)$, the expression $\sup_{\varphi \in B_{E^{\ast }}}\left( \sum_{j=1}^{\infty} | \varphi (x_{j}) |^s\right)^{\frac{1}{s}}$ defines a norm on $\ell _{s}^{w}(E)$. For $p,q\in\lbrack1,+\infty)$, a multilinear operator $T:E_{1}\times\cdots\times E_{m}\rightarrow \mathbb{K}$ is multiple $(q;p)$-summing if there exist a constant $C>0$ such that
	\[
	\left( \sum\limits_{j_{1},...,j_{m}=1}^{\infty}| T( x_{j_{1}}^{(1)},\dots,x_{j_{m}}^{(m)})|^{q}\right)^{\frac{1}{q}}\leq C \left( \sup_{\varphi \in B_{E^{\ast }}}\left( \sum_{j=1}^{\infty} | \varphi (x_{j}^{(k)})|^p\right)^{\frac{1}{p}}\right)^m
	\]
	for all $(x_{j}^{(k)})_{j=1}^{\infty}\in\ell_{p}^{w}\left( E_{k}\right)$.
For recent results of multiple summing operators we refer to \cite{popa}.

\begin{theorem}
\label{ha}If $m,n$ are positive integers and $p\in \left(\frac{2m}{m+1},\infty \right],$ then
\[
\left( \sum_{j_{1},\ldots ,j_{m}=1}^{n}\left\vert
T(e_{j_{1}},\ldots ,e_{j_{m}})\right\vert ^{\frac{2mp}{mp+p-2m}}\right) ^{\frac{mp+p-2m}{2mp}}\leq 1.3m^{0.365}\left\Vert T\right\Vert
\]
for all unimodular $m$-linear forms $T:\ell _{p}^{n}\times \cdots \times \ell _{p}^{n}\rightarrow \mathbb{K}$. Moreover, the exponent is sharp for $p\geq 2$. For $1<p\leq \frac{2m}{m+1}$ the optimal exponent is not smaller than $\frac{mp}{p-1}$ and for $\frac{2m}{m+1}<p\leq 2$ the optimal exponent belongs to $\left[\frac{mp}{p-1},\frac{2mp}{mp+p-2m}\right]$.
\end{theorem}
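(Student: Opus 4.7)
The plan is to observe that, for unimodular $T$, the displayed inequality reduces to a lower bound on $\|T\|$, which follows by combining the Bohnenblust--Hille inequality at $p=\infty$ with the elementary comparison between the $\ell_{\infty}^{n}$ and $\ell_{p}^{n}$ norms. Since $|T(e_{j_{1}},\ldots,e_{j_{m}})|=1$ for every multi-index, the left hand side equals $n^{(mp+p-2m)/(2p)}$; this exponent is positive precisely when $p>\tfrac{2m}{m+1}$, which is the hypothesis. The assertion is therefore equivalent to showing
\[
\|T\|_{\ell_{p}^{n}\times\cdots\times\ell_{p}^{n}\to\mathbb{K}}\;\geq\;\frac{n^{(mp+p-2m)/(2p)}}{1.3\,m^{0.365}}.
\]

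First I would apply the Bohnenblust--Hille inequality, with the constant $1.3\,m^{0.365}$ of \cite{adv}, to $T$ viewed as an $m$-linear form on $\ell_{\infty}^{n}\times\cdots\times\ell_{\infty}^{n}$; unimodularity collapses the sum on the left into $n^{m}$ and yields $n^{(m+1)/2}\leq 1.3\,m^{0.365}\,\|T\|_{\ell_{\infty}^{n}\to\mathbb{K}}$. Next I would compare the two norms via $\|x\|_{p}\leq n^{1/p}\|x\|_{\infty}$: rescaling each argument of $T$ by $n^{1/p}$ gives $\|T\|_{\ell_{\infty}^{n}\to\mathbb{K}}\leq n^{m/p}\,\|T\|_{\ell_{p}^{n}\to\mathbb{K}}$. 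Combining these two inequalities, and recalling the identity $\tfrac{m+1}{2}-\tfrac{m}{p}=\tfrac{mp+p-2m}{2p}$, produces exactly the required lower bound.

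For the optimality statements I would invoke the Kahane--Salem--Zygmund inequality (Theorem \ref{kszi}) to produce, for each $n$, a unimodular $m$-linear form $A$ on $\ell_{p}^{n}$ with $\|A\|\leq C_{m}\,n^{\alpha(p)}$, where $\alpha(p)=\max\{(mp+p-2m)/(2p),\,(p-1)/p\}$. Testing the inequality (with an unknown exponent $s$ in place of $\tfrac{2mp}{mp+p-2m}$) against $A$ forces $n^{m/s}\leq C_{m}\cdot 1.3\,m^{0.365}\cdot n^{\alpha(p)}$, and letting $n\to\infty$ gives $m/s\leq\alpha(p)$. For $p\geq 2$ the first term in the max dominates, which yields $s\geq\tfrac{2mp}{mp+p-2m}$ and hence sharpness. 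For $\tfrac{2m}{m+1}<p\leq 2$ the second term dominates, giving only $s\geq\tfrac{mp}{p-1}$, while the matching upper bound $s\leq\tfrac{2mp}{mp+p-2m}$ in that range is precisely the inequality already proved. The only conceptual point is that the full Hardy--Littlewood machinery and its $p$-dependent constant $C_{m,p}^{\mathbb{K}}$ are unnecessary when the coefficients are unimodular; the only obstacle is bookkeeping, namely keeping straight at each step which $\ell$-space hosts the form $T$.
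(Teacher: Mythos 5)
Your proposal is correct and follows essentially the same route as the paper. Both arguments hinge on the Bohnenblust--Hille inequality with constant $1.3m^{0.365}$ applied to a unimodular form (giving the lower bound $n^{(m+1)/2}\leq 1.3m^{0.365}\|T\|_{\ell_\infty^n}$), followed by the same factor $n^{m/p}$ to transfer to $\ell_p^n$ — you obtain it by rescaling via $\|x\|_p\leq n^{1/p}\|x\|_\infty$, while the paper packages it as the weak-$1$ norm of the unit vector basis in $\ell_p^n$ through the multiple $(\tfrac{2m}{m+1};1)$-summing formulation; these are the same computation, and your optimality argument via Kahane--Salem--Zygmund is identical to the paper's.
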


\begin{proof}
Using the isometric characterization of
the spaces of weak $1$-summable sequences on $c_{0}$ (see \cite{diestel}) we
know that every continuous $m$-linear form is multiple $\left( \frac{2m}{m+1}%
;1\right) $-summing with constant dominated by $1.3m^{0.365}$.

Thus 
\begin{equation*}
\left(\sum_{j_{1},\ldots,j_{m}=1}^{n}\left\vert
T(e_{j_{1}},\ldots,e_{j_{m}})\right\vert ^{\frac{2m}{m+1}}\right) ^{\frac{m+1%
}{2m}}\leq1.3m^{0.365}\left\Vert T\right\Vert \left( \sup_{\varphi\in
B_{\ell_{p^{\ast}}^{n}}}\sum_{j=1}^{n}\left\vert \varphi_{j}\right\vert
\right) ^{m}
\end{equation*}
for all $m$-linear forms 
\begin{equation*}
T:\ell_{p}^{n}\times\cdots\times\ell_{p}^{n}\rightarrow\mathbb{K}\text{.}
\end{equation*}
Hence%
\begin{equation*}
\left( n^{m}\right) ^{\frac{m+1}{2m}}\leq1.3m^{0.365}\left\Vert T\right\Vert
\left( n\frac{1}{n^{1/p^{\ast}}}\right) ^{m}
\end{equation*}
and finally%
\begin{equation*}
\left\Vert T\right\Vert \geq\frac{1}{1.3m^{0.365}}n^{\frac{mp+p-2m}{2p}}
\end{equation*}
and this means that 
\begin{equation*}
\left( \sum_{j_{1},\ldots,j_{m}=1}^{n}\left\vert
T(e_{j_{1}},\ldots,e_{j_{m}})\right\vert ^{\frac{2mp}{mp+p-2m}}\right) ^{%
\frac{mp+p-2m}{2mp}}\leq1.3m^{0.365}\left\Vert T\right\Vert .
\end{equation*}

Let us prove the optimality of the exponents for $p\geq 2$. Suppose that the theorem is valid for an exponent $r$, i.e.,
\[
\left( \sum_{j_{1},\ldots,j_{m}=1}^{n}\left\vert
T(e_{j_{1}},\ldots,e_{j_{m}})\right\vert ^{r}\right) ^{\frac{1}{r}}\leq 1.3m^{0.365}\left\Vert T\right\Vert.
\]
Since $p\geq 2$, from the Kahane--Salem--Zygmund inequality (Theorem \ref{kszi}) we have 
\[
n^{\frac{m}{r}}\leq 1.3m^{0.365}C_{m}n^{\frac{1}{2}+m\left(\frac{1}{2}-\frac{1}{p}\right)}=C_m1.3m^{0.365}n^{\frac{mp+p-2m}{2p}}
\]
and thus, making $n\rightarrow \infty $, we obtain $r\geq \frac{2mp}{mp+p-2m}$.

For $1<p\leq 2$, if the inequality holds for a certain exponent $r$, from the Kahane--Salem--Zygmund inequality (Theorem \ref{kszi}) we have
\[
n^{\frac{m}{r}}\leq Cn^{1-\frac{1}{p}}=Cn^{\frac{p-1}{p}}
\]
and thus, making $n\rightarrow \infty $, we obtain $r\geq \frac{mp}{p-1}$.

\end{proof}

\begin{figure}[!h]
\begin{tikzpicture}
\path[draw,shade,bottom color=red,opacity=.2]
(72/21,6) --
(60/17,5) -- (48/13,4) -- (42/11,7/2) --
(4,3) -- (28/8,7/2) -- (16/5,4) -- (20/7,5) -- (24/9,6) -- cycle;

\path[draw,shade,bottom color=blue,opacity=.2]
(2,0) -- (2,6) -- (24/9,6) --
(20/7,5) -- (16/5,4) -- (28/8,7/2) --
(4,3) -- (9/2,27/12) -- (5,15/8) -- (6,18/12) -- (7,21/16) -- (8,24/20) -- (9,27/24) -- (10,30/28) -- (11,33/32) -- (12,36/36) --
(12,0) -- cycle;

\path[draw,shade,bottom color=blue,opacity=.2]
(4,-1) -- (4,-1.5) -- (4.5,-1.5) -- (4.5,-1) -- cycle;
\draw (4.5,-1.3) node[right] {Non-admissible exponents};

\path[draw,shade,top color=green,opacity=.2]
(72/21,6) --
(60/17,5) -- (48/13,4) -- (42/11,7/2) --
(4,3) --
(9/2,27/12) -- (5,15/8) -- (6,18/12) -- (7,21/16) -- (8,24/20) -- (9,27/24) -- (10,30/28) -- (11,33/32) -- (12,36/36) --
(12,6) -- cycle;

\path[draw,shade,top color=green,opacity=.2]
(4,-1.75) -- (4,-2.25) -- (4.5,-2.25) -- (4.5,-1.75) -- cycle;
\draw (4.5,-2.05) node[right] {Admissible exponents};

\path[draw,shade,top color=red,opacity=.2]
(4,-2.5) -- (4,-3) -- (4.5,-3) -- (4.5,-2.5) -- cycle;
\draw (4.5,-2.8) node[right] {Unknown exponents};

\draw (3,0) node[below] {$\frac{2m}{m+1}$};
\draw[dotted] (3,0) -- (3,6);

\draw (4,0) node[below] {$2$};
\draw[dotted] (4,0) -- (4,3);

\draw (2,3) node[left] {$2m$};
\draw[dotted] (2,3) -- (4,3);

\draw (2,3/4) node[left] {$\frac{2m}{m+1}$};
\draw[dotted] (2,3/4) -- (12,3/4);

\draw (2,0) node[below] {$1$};
\draw[dotted] (2,6) -- (2,0);

\draw[->] (2,0) -- (12.5,0) node[below] {$p$};
\draw[->] (2,0) -- (2,6.5) node[left] {$ $};
\draw (2,0) node[left] {$0$};


\begin{scope}[shift={(0,0)}]
\draw[domain=24/9:4, ultra thick, smooth]
plot (\x,{(1.5*\x)/(\x-2)});
\end{scope}

\draw (2.5,5) node[below] {$\frac{mp}{p-1}$};

\begin{scope}[shift={(0,0)}]
\draw[domain=72/21:12, ultra thick, smooth]
plot (\x,{(1.5*\x)/(2*\x-6)});
\end{scope}

\draw (5.5,3) node[below] {$\frac{2mp}{mp+p-2m}$};
\end{tikzpicture}

\caption{Graphical overview of the exponents in Theorem \ref{ha}.}\label{ddssdds}

\end{figure}
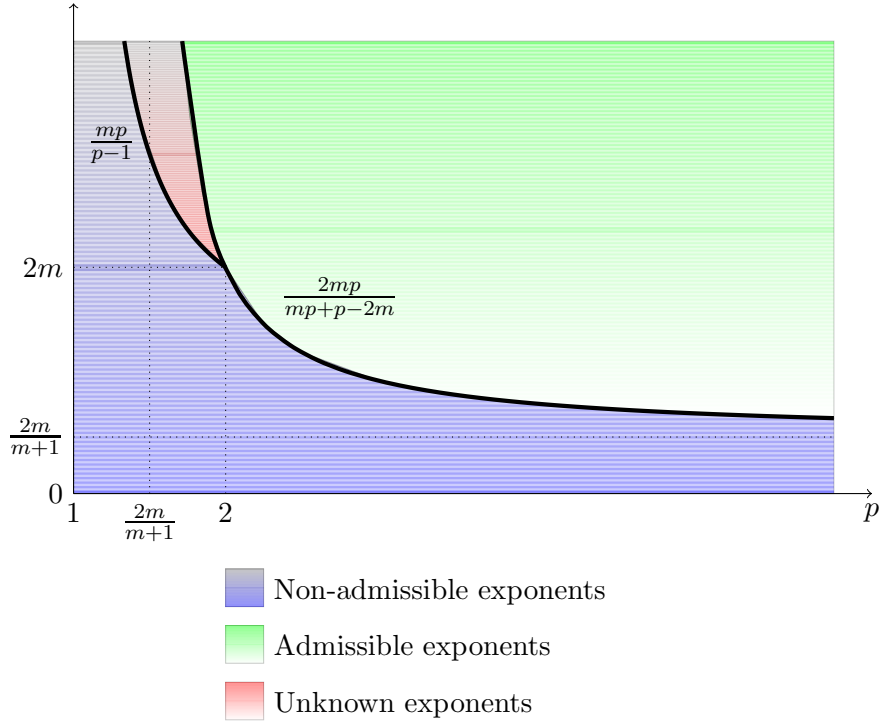

The determination of the unknown exponents rely in an open result on the interpolation of certain multilinear forms, which seems to be open for a long time: every continuous $m$-linear form from $\ell_1 \times \cdots \ell_1$ to $\mathbb{K}$ is multiple $(1,1)$-summing and every continuous $m$-linear operators from $\ell_2 \times \cdots \ell_2$ to $\mathbb{K}$ is multiple $(\frac{2m}{m+1},1)$-summing. What about intermediate results for $\ell_p$. The natural result would be, for $1\leq p\leq 2$ that every continuous $m$-linear operators from $\ell_p \times \cdots \ell_p$ to $\mathbb{K}$ is multiple $(\frac{mp}{m+p-1},1)$-summing. Even in the linear case, similar vector-valued problems remain open (see \cite{bennett})

We conjecture the following optimal result:

\begin{conjecture}
\label{conjecture}If $m,n$ are positive integers and $p\in\lbrack1,\infty],$ then
there is a constant $K_{m}$ such that
\[
\left\vert
\begin{array}{l}
\displaystyle\left(  \sum_{j_{1},\ldots,j_{m}=1}^{n}\left\vert T(e_{j_{1}},\ldots,e_{j_{m}})\right\vert ^{\frac{mp}{p-1}}\right)  ^{\frac{p-1}{mp}}\leq K_{m}\left\Vert T\right\Vert \text{ for }1 \leq p\leq2,\\
\displaystyle\left(\sum_{j_{1},\ldots,j_{m}=1}^{n}\left\vert T(e_{j_{1}},\ldots,e_{j_{m}})\right\vert ^{\frac{2mp}{mp+p-2m}}\right)  ^{\frac{mp+p-2m}{2mp}}\leq 1.3m^{0.365}\left\Vert T\right\Vert \text{ for }p\geq2,
\end{array}
\right.
\]
for all unimodular $m$-linear forms $T:\ell_{p}^{n}\times\cdots\times\ell_{p}^{n}\rightarrow\mathbb{K}$ and the exponents are sharp.
\end{conjecture}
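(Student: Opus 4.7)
My plan focuses on the truly new content of the conjecture. The case $p \geq 2$ is already established as Theorem \ref{ha}, and within the remaining range $1 \leq p \leq 2$ the two endpoints are clear: at $p = 1$ the exponent $\tfrac{mp}{p-1}$ degenerates to $\infty$ and the inequality reduces to the trivial bound $\max_{j_1,\ldots,j_m}|T(e_{j_1},\ldots,e_{j_m})| \leq K_m\|T\|$, while at $p = 2$ both branches of the conjecture agree (giving exponent $2m$) and the statement is precisely the Bohnenblust--Hille inequality with constant $1.3m^{0.365}$. The substantive task is therefore to prove the inequality on the open interval $1 < p < 2$.

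Following the template of Theorem \ref{ha}, my first step would be to reduce the conjecture to an interpolated multiple summing statement. Specifically, suppose one could show that every continuous $m$-linear form $T : \ell_p^n \times \cdots \times \ell_p^n \to \mathbb{K}$ is multiple $\bigl(\tfrac{mp}{m+p-1}; 1\bigr)$-summing with constant at most $K_m$. Evaluating on the canonical basis $(e_j)_{j=1}^n \subset \ell_p^n$, whose weak-$\ell_1$ norm equals $\sup_{\|\varphi\|_{p^*}\leq 1}\sum_j|\varphi_j| = n^{1/p}$, yields for a unimodular $T$ the bound $n^{(m+p-1)/p} \leq K_m\|T\|\,n^{m/p}$, equivalently $\|T\| \geq K_m^{-1}\,n^{1-1/p}$. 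This is exactly the conjectured inequality, since for a unimodular $T$ the conjecture's left-hand side evaluates to $(n^m)^{(p-1)/(mp)} = n^{1-1/p}$. The exponent $\tfrac{mp}{m+p-1}$ is the natural interpolation of the known endpoint multiple summing exponents $q_1 = 1$ ($m$-linear forms on $\ell_1$ are $(1;1)$-summing) and $q_2 = \tfrac{2m}{m+1}$ (the Bohnenblust--Hille exponent used in the proof of Theorem \ref{ha}).

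The main obstacle is precisely this interpolation step. As the authors themselves flag just before stating the conjecture, multilinear interpolation of summing ideals on $\ell_p$ has remained open for some time and is tied to unresolved vector-valued questions even in the linear setting (cf.\ \cite{bennett}). Multiple summing ideals do not interact cleanly with standard Calderón-type complex interpolation in the multilinear regime: naive analytic-family arguments tend to lose a factor in the exponent or pick up parasitic $\log n$ factors that prevent a sharp conclusion. My first serious attempt would be a Calderón complex interpolation along the strip connecting the $p = 1$ and $p = 2$ endpoints --- setting up an analytic multilinear family of forms and applying the three-lines lemma --- but I would fully expect that controlling the multilinear tensorial structure requires genuinely new ideas.

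As a fallback, should the sharp constant $K_m$ resist, I would try to combine the Hardy--Littlewood estimates \eqref{poi} in the range $m < p \leq 2m$ with a Blei-type mixed-$\ell^r$ polarization argument to contract the Hardy--Littlewood exponent toward $\tfrac{mp}{p-1}$, at the cost of a possibly inflated constant (polynomial or exponential in $m$). Such a weaker statement would already be of interest, since the optimality of the target exponent $\tfrac{mp}{p-1}$ in the sub-range $\tfrac{2m}{m+1} < p \leq 2$ is already established in Theorem \ref{ha}; matching this exponent, even with a non-optimal constant, would resolve the qualitative part of the conjecture and isolate the remaining difficulty to the sharpening of $K_m$.
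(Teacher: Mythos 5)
The statement you were asked to prove is labelled a \emph{conjecture} in the paper, and the paper provides no proof of it; in fact the authors explicitly flag, in the paragraph immediately preceding the conjecture, that the missing ingredient is an open interpolation problem. So there is no ``paper proof'' to compare against, and the most useful thing I can do is check whether your analysis correctly identifies what a proof would require.

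It does, and it matches the authors' own framing almost exactly. Your reduction is correct: you observe that the case $p\geq 2$ is Theorem~\ref{ha}, that $p=1$ is trivial and $p=2$ is the Bohnenblust--Hille endpoint, and that the open range $1<p<2$ would follow if one could show that every continuous $m$-linear form $T:\ell_p^n\times\cdots\times\ell_p^n\to\mathbb{K}$ is multiple $\bigl(\tfrac{mp}{m+p-1};1\bigr)$-summing with constant $K_m$. Your arithmetic checks out: the weak-$\ell_1$ norm of the basis $(e_j)$ in $\ell_p^n$ is $n^{1/p}$, evaluation on the basis gives $n^{(m+p-1)/p}\leq K_m\|T\|\,n^{m/p}$, hence $\|T\|\geq K_m^{-1}n^{1-1/p}$, which for unimodular $T$ is exactly the conjectured inequality since the conjecture's left-hand side evaluates to $n^{(p-1)/p}$. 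The interpolation exponent $\tfrac{mp}{m+p-1}$ you propose is precisely the one the authors name in the discussion before the conjecture, interpolating between multiple $(1;1)$-summability on $\ell_1$ and multiple $\bigl(\tfrac{2m}{m+1};1\bigr)$-summability on $\ell_2$, and you correctly cite the same obstruction (\cite{bennett}-type vector-valued interpolation issues) they do. You also correctly note that the sharpness half of the conjecture for $\tfrac{2m}{m+1}<p\leq 2$ already follows from the Kahane--Salem--Zygmund lower bound established in Theorem~\ref{ha}, so only the upper estimate is genuinely open.

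In short: you have not proved the statement, but neither has the paper; your write-up accurately reduces the conjecture to the same open multiple-summing interpolation question that the authors themselves identify, and it is transparent about the fact that this is where the argument stops. That is the correct assessment of the state of the problem.
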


\section{Revisiting the classical unbalancing lights problem}

\subsection{The classical unbalancing lights problem}

In this section we prove a non asymptotic version of \eqref{6hh3} showing the only situations in which the minimum estimate is achieved.

\begin{theorem}
\label{611}Let $a_{ij}=\pm1$ for $1\leq i,j\leq n$. Then there exist $x_{i},y_{j}=\pm1$, $1\leq i,j\leq n$, such that 
\begin{equation*}
\sum\limits_{i,j=1}^{n}a_{ij}x_{i}y_{j}\geq2^{-1/2}n^{3/2},
\end{equation*}
and the equality happens if, and only if, $n=2$ and 
\begin{equation}
\left( a_{ij}\right) =\pm
\begin{bmatrix}
1 & 1 \\ 
1 & -1%
\end{bmatrix}
,\pm%
\begin{bmatrix}
1 & 1 \\ 
-1 & 1%
\end{bmatrix}
,\pm%
\begin{bmatrix}
1 & -1 \\ 
1 & 1%
\end{bmatrix}
\text{ or }\pm%
\begin{bmatrix}
-1 & 1 \\ 
1 & 1%
\end{bmatrix}
.  \label{cf}
\end{equation}
In other words, for any initial configuration $\left( a_{ij}\right) $ it is
possible to perform switches so that the number of lights on minus the
number of lights off is at least $2^{-1/2}n^{3/2}$ and the equality happens
if and only if $\left( a_{ij}\right) $ is as in (\ref{cf}).
\end{theorem}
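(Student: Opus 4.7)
The plan is to run the standard Alon--Spencer probabilistic extraction but to replace the CLT input by the sharp finite-$n$ estimate $\mathbb{E}|S_n|\geq\sqrt{n/2}$, and then to unpack each inequality in order to pin down when equality holds. I will pick $y=(y_1,\ldots,y_n)\in\{-1,+1\}^n$ uniformly at random and set $x_i:=\operatorname{sgn}(R_i(y))$, where $R_i(y):=\sum_{j}a_{ij}y_j$. For this choice $\sum_{i,j}a_{ij}x_iy_j=\sum_i|R_i(y)|$, and since for each fixed $i$ the family $(a_{ij}y_j)_{j=1}^{n}$ is i.i.d.\ Rademacher, $R_i(y)$ has the same law as $S_n:=\sum_{j=1}^{n}y_j$. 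Taking expectations yields
\[
\max_{x,y\in\{\pm 1\}^n}\sum_{i,j=1}^{n}a_{ij}x_iy_j \;\geq\; \mathbb{E}_y\sum_{i=1}^{n}|R_i(y)| \;=\; n\,\mathbb{E}|S_n|.
\]

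Next I will establish $\mathbb{E}|S_n|\geq\sqrt{n/2}$ with equality if and only if $n=2$. This is Szarek's sharp $L_1$-Khintchine inequality specialised to the constant sequence $a\equiv 1$, but it can equally well be verified elementarily: from the closed form $\mathbb{E}|S_{2m}|=m\binom{2m}{m}2^{1-2m}$, a short induction shows that $\mathbb{E}|S_{2m}|/\sqrt{m}$ is strictly increasing in $m$ and equals $1$ at $m=1$, while the cases of odd $n$ and of $n=1$ are checked by hand. Combining with the first step produces the desired lower bound $2^{-1/2}n^{3/2}$.

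Finally, equality in the theorem forces \emph{both} $\mathbb{E}|S_n|=\sqrt{n/2}$ (so $n=2$) and constancy of $y\mapsto\sum_i|R_i(y)|$ in $y$. For $n=2$ each $|R_i(y)|\in\{0,2\}$, with $|R_i(y)|=2$ precisely when $a_{i1}a_{i2}y_1y_2=1$, so $|R_1(y)|+|R_2(y)|$ is identically $2$ if and only if $a_{11}a_{12}=-a_{21}a_{22}$, i.e.\ $\prod_{i,j}a_{ij}=-1$. Listing the eight $2\times 2$ sign matrices with an odd number of $-1$ entries reproduces \eqref{cf} verbatim.

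The genuinely delicate point will be the equality case of the Khintchine step: one must exhibit the strict inequality $\mathbb{E}|S_n|>\sqrt{n/2}$ for $n=1$ and for every $n\geq 3$, since without strictness there one cannot rule out $n\neq 2$ in the equality case. Everything after that is a careful chain of equalities and a short enumeration.
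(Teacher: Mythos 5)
Your proof is correct but follows a genuinely different route from the paper's. The paper invokes Littlewood's $4/3$--inequality with constant $\sqrt{2}$, notes that the bilinear norm on $\ell_\infty^n\times\ell_\infty^n$ is attained at $\pm1$ vectors (Krein--Milman), which gives $\max_{x,y}\sum a_{ij}x_iy_j=\|T\|\geq 2^{-1/2}n^{3/2}$, and then imports from \cite{pt} a complete classification of the norm-one extremizers of Littlewood's inequality: they all live on a $2\times2$ block with entries $\pm2^{-1/2}$, so a unimodular form can only be extremal when $n=2$, and the listed matrices fall out. Your argument instead runs the classical Alon--Spencer probabilistic extraction with the exact finite-$n$ input $\mathbb{E}|S_n|\geq\sqrt{n/2}$, and then reads off the equality case from two sources: tightness of Khintchine (forcing $n=2$, via Szarek's theorem or the explicit formula for $\mathbb{E}|S_n|$) and constancy of $y\mapsto\sum_i|R_i(y)|$ (which for $n=2$ reduces to $\prod a_{ij}=-1$ and hence the eight matrices). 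Your approach is self-contained and even yields the intermediate bound $n\,\mathbb{E}|S_n|$, which is strictly larger than $2^{-1/2}n^{3/2}$ for $n\neq 2$; the paper's approach offloads the hardest part of the equality analysis to the extreme-point classification of \cite{pt}. One small caution: you describe strict inequality for odd $n$ as something to be ``checked by hand,'' but this must be verified for every odd $n$, not just small cases. It does hold --- for instance from $\mathbb{E}|S_{2m+1}|=(2m+1)\binom{2m}{m}2^{-2m}$ together with $\binom{2m}{m}2^{-2m}>1/\sqrt{2(2m+1)}$, or from the monotonicity $\mathbb{E}|S_{n+1}|\geq\mathbb{E}|S_n|$ with strictness at even $n$ --- so this is a gap in exposition rather than in substance, but you should supply it.
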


\begin{proof}
Littlewood's $4/3$-inequality asserts that 
\begin{equation}
\left( \sum_{j,k=1}^{n}\left\vert T(e_{j},e_{k})\right\vert ^{\frac{4}{3}%
}\right) ^{\frac{3}{4}}\leq\sqrt{2}\sup_{\left\Vert x\right\Vert ,\left\Vert
y\right\Vert \leq1}\left\vert T(x,y)\right\vert ,  \label{999}
\end{equation}
for all continuous bilinear forms $T\colon\ell_{\infty}^{n}\times\ell_{%
\infty }^{n}\rightarrow\mathbb{R}$ and all positive integers $n$. It is not
difficult to prove that the supremum in the right-hand-side of (\ref{999})
is achieved in the extreme points of the closed unit ball of $%
\ell_{\infty}^{n}$. Since these extreme point are precisely those with the
entries $1$ or $-1,$ we conclude that there exist $x_{i},y_{j}=\pm1$, $1\leq
i,j\leq n$, such that 
\begin{equation*}
\sum\limits_{i,j=1}^{n}a_{ij}x_{i}y_{j}\geq2^{-1/2}n^{3/2}.
\end{equation*}
It remains to prove that the equality happens if and only if $\left(
a_{ij}\right) $ is as in (\ref{cf}). To prove this we recall the following
result of \cite{pt}:

\begin{itemize}
\item A bilinear form $T$ is an (norm-one) extreme of Littlewood's $4/3$
inequality if and only if $T$ is written as 
\begin{align*}
T(x,y) & =\pm2^{-1/2}\left(
x_{i_{1}}y_{i_{2}}+x_{i_{1}}y_{i_{3}}+x_{i_{4}}y_{i_{2}}-x_{i_{4}}y_{i_{3}}%
\right) , \\
T(x,y) & =\pm2^{-1/2}\left(
x_{i_{1}}y_{i_{2}}+x_{i_{1}}y_{i_{3}}-x_{i_{4}}y_{i_{2}}+x_{i_{4}}y_{i_{3}}%
\right) , \\
T(x,y) & =\pm2^{-1/2}\left(
x_{i_{1}}y_{i_{2}}-x_{i_{1}}y_{i_{3}}+x_{i_{4}}y_{i_{2}}+x_{i_{4}}y_{i_{3}}%
\right) , \\
T(x,y) & =\pm2^{-1/2}\left(
-x_{i_{1}}y_{i_{2}}+x_{i_{1}}y_{i_{3}}+x_{i_{4}}y_{i_{2}}+x_{i_{4}}y_{i_{3}}%
\right)
\end{align*}
for $i_{1}\neq i_{4}$ and $i_{2}\neq i_{3}.$
\end{itemize}

From the above theorem we conclude that when we deal with bilinear forms
with coefficients $1$ or $-1$, the equality in (\ref{999}) happens if and
only if $n=2$ and 
\begin{align*}
T(x,y) & =\pm\left( x_{1}y_{1}+x_{1}y_{2}+x_{2}y_{1}-x_{2}y_{2}\right) , \\
T(x,y) & =\pm\left( x_{1}y_{1}+x_{1}y_{2}-x_{2}y_{1}+x_{2}y_{2}\right) , \\
T(x,y) & =\pm\left( x_{1}y_{1}-x_{1}y_{2}+x_{2}y_{1}+x_{2}y_{2}\right) , \\
T(x,y) & =\pm\left( -x_{1}y_{1}+x_{1}y_{2}+x_{2}y_{1}+x_{2}y_{2}\right)
\end{align*}
and the proof is done.
\end{proof}

\subsection{The classical unbalancing lights problem in higher dimensions}

The next result provides an asymptotic variant of \eqref{6hh2} in the lines of \eqref{6hh3}:

\begin{theorem}
\label{654} Let $m$ be a positive integer and $a_{i_{1}\cdots i_{m}}=\pm1$
for all $i_{1},\ldots,i_{m}$. Then, for all $k=1,\ldots,m,$ there exist $%
x_{i_{j}}^{(k)}=\pm1$, $1\leq j\leq n$, such that 
\begin{equation}  \label{a1a}
\sum\limits_{i_{1},\ldots,i_{m}=1}^{n}a_{i_{1}\cdots
i_{m}}x_{i_{1}}^{(1)}\cdots x_{i_{m}}^{(m)}\geq\left( 2^{1-\psi\left(
m+1\right) -\gamma}\left(\prod\limits_{k=2}^{m}\frac{\Gamma\left( \frac{3k-2%
}{2k}\right)}{\Gamma\left( \frac{3}{2}\right) }\right) +o(1)\right) n^{\frac{%
m+1}{2}},
\end{equation}
where $\psi$ is the digamma function and $\gamma$ is the Euler-Mascheroni
constant.
\end{theorem}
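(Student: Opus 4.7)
The strategy is an iterated-CLT/Khintchine extension of Alon--Spencer's probabilistic argument, together with an algebraic reduction that rewrites the constant appearing in \eqref{a1a} as a product of Gaussian absolute moments.

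\emph{Reduction.} First I would perform the standard probabilistic reduction: choose $x^{(2)},\ldots,x^{(m)}$ independently and uniformly on $\{-1,+1\}^n$ and set $x^{(1)}_{i_1}:=\operatorname{sign}(R_{i_1})$ where $R_{i_1}:=\sum_{i_2,\ldots,i_m} a_{i_1\cdots i_m}\,x^{(2)}_{i_2}\cdots x^{(m)}_{i_m}$. The $m$-linear sum then equals $\sum_{i_1}|R_{i_1}|$, whose expectation over $(x^{(2)},\ldots,x^{(m)})$ is $n\cdot \mathbb{E}|R_1|$ by symmetry in $i_1$. By the usual averaging (existence) argument, the theorem reduces to the asymptotic lower bound
\[
\mathbb{E}|R_1|\ \geq\ (C_m+o(1))\,n^{(m-1)/2},\qquad
C_m := 2^{1-\psi(m+1)-\gamma}\prod_{k=2}^m \frac{\Gamma((3k-2)/(2k))}{\Gamma(3/2)}.
\]

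\emph{Closed form of $C_m$.} Before attacking the lower bound on $\mathbb{E}|R_1|$, I would verify the identity $C_m=\prod_{k=2}^m \mathbb{E}|Z|^{2-2/k}$ with $Z\sim\mathcal{N}(0,1)$. Writing $\mathbb{E}|Z|^p=2^{p/2}\Gamma((p+1)/2)/\sqrt{\pi}$ with $p=2-2/k$ (so that $(p+1)/2=(3k-2)/(2k)$), the identity is an algebraic consequence of $\Gamma(3/2)=\sqrt{\pi}/2$, the telescoping sum $\sum_{k=2}^m (k-1)/k=m-H_m$, and $\psi(m+1)+\gamma=H_m$. This rewriting is the conceptual key, because it exhibits $C_m$ as the natural asymptotic outcome of an iterated Gaussian calculation.

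\emph{Iterated Khintchine.} Since $R_1$ is a Rademacher multilinear chaos of degree $m-1$ with unit-modulus coefficients, my plan is to integrate out the variables $x^{(2)},x^{(3)},\ldots,x^{(m)}$ one at a time. At stage $k$ the remaining object reduces to a Rademacher sum of ``balanced'' scalars $c_{i_k}$, to which I would apply the sharp asymptotic Khintchine-type estimate
\[
\mathbb{E}_{x^{(k)}}\Bigl|\sum_{i_k} x^{(k)}_{i_k}\,c_{i_k}\Bigr|^{q_k}\ \sim\ \mathbb{E}|Z|^{q_k}\Bigl(\sum_{i_k} c_{i_k}^2\Bigr)^{q_k/2}\qquad (n\to\infty),
\]
with exponent $q_k=2-2/k$. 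These exponents are chosen so that the $q_k$-th moment produced at stage $k$, after extracting an $L^2$-square root, couples exactly to the quantity needed at stage $k+1$. Iterating $m-1$ times produces the telescoping product $\prod_{k=2}^m \mathbb{E}|Z|^{q_k}=C_m$ and gives the required lower bound on $\mathbb{E}|R_1|$.

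\emph{Main obstacle.} The technical heart of the argument is the iterated Khintchine step. Two delicate points have to be controlled: (i) the sharp asymptotic constant $\mathbb{E}|Z|^{q_k}$ requires that at stage $k$ no single coefficient $c_{i_k}$ dominates; this balance has to be propagated across stages via concentration of measure, and is where the hypothesis $|a_{i_1\cdots i_m}|=1$ plays its essential role; (ii) identifying the exponent at stage $k$ as exactly $q_k=2-2/k$ is what makes the product telescope into the closed form of Paragraph~2. For highly structured $a$ (e.g.\ the all-ones tensor) the iterated moment bound does not reach $C_m$, but in that regime the actual maximum is of order $n^m\gg C_m\,n^{(m+1)/2}$, so the bound is trivial by a deterministic choice; thus the iteration only has to be sharp in the genuinely balanced regime, which is exactly where the unit-modulus hypothesis confines the problem.
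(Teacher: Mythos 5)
Your reduction to the single expectation $\mathbb{E}\lvert R_1\rvert$ is where the argument breaks. The claimed inequality $\mathbb{E}\lvert R_1\rvert \geq (C_m+o(1))\,n^{(m-1)/2}$ is \emph{false} for some unimodular tensors. Take $m=3$ and $a\equiv 1$: then $R_1=\bigl(\sum_j x^{(2)}_j\bigr)\bigl(\sum_k x^{(3)}_k\bigr)$, so
\[
\mathbb{E}\lvert R_1\rvert \;=\; \Bigl(\mathbb{E}\Bigl\lvert \sum_j x^{(2)}_j\Bigr\rvert\Bigr)^2 \;\sim\; \frac{2}{\pi}\,n \;\approx\; 0.6366\,n,
\]
whereas $C_3=\sqrt{2/\pi}\cdot\mathbb{E}\lvert Z\rvert^{4/3}\approx 0.6629$. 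So the expectation of your randomized strategy falls strictly \emph{below} $C_m\,n^{(m+1)/2}$ for such tensors, even though the true maximum ($n^3$) is huge. You acknowledge this in the ``Main obstacle'' paragraph and propose a dichotomy between ``balanced'' and ``structured'' tensors, but this dichotomy is not defined quantitatively and no interpolation between the two regimes is given; making that precise would be a substantial new argument, not a technical detail. In its present form the proposal does not prove the theorem.

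The paper avoids this entirely by taking a norm route: it proves an asymptotic Bohnenblust--Hille inequality
\[
\Bigl(\sum_{i_1,\dots,i_m}\lvert A(e_{i_1},\dots,e_{i_m})\rvert^{\frac{2m}{m+1}}\Bigr)^{\frac{m+1}{2m}}\leq \Bigl(\textstyle\prod_{k=2}^m f\bigl(\tfrac{2(k-1)}{k}\bigr)+o(1)\Bigr)\lVert A\rVert
\]
via the iterated Haagerup--Khintchine and mixed-norm Minkowski/H\"older steps (same exponents $2-2/k$ that you identified), and then observes that for unimodular coefficients the left-hand side is \emph{exactly} $n^{(m+1)/2}$; no balance assumption on the tensor is needed, and the Krein--Milman lemma upgrades the resulting lower bound on $\lVert A\rVert$ to an attainment at $\pm1$ vertices. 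Your algebraic identification $C_m=\prod_{k=2}^m\mathbb{E}\lvert Z\rvert^{2-2/k}$ and the telescoping of $\sum_{k=2}^m(1-1/k)$ via $\psi(m+1)+\gamma=H_m$ are correct and do agree with the paper's constant; the gap is purely in the logical structure of the reduction.
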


We begin by recalling some useful technical results:

\begin{lemma}[Minkowski]
\label{min}If $0<p<q<\infty$, then%
\begin{equation*}
\left( \sum\limits_{j=1}^{n}\left( \sum\limits_{i=1}^{n}\left\vert
a_{ij}\right\vert ^{p}\right) ^{\frac{1}{p}q}\right) ^{\frac{1}{q}%
}\leq\left( \sum\limits_{i=1}^{n}\left( \sum\limits_{j=1}^{n}\left\vert
a_{ij}\right\vert ^{q}\right) ^{\frac{1}{q}p}\right) ^{\frac{1}{p}}
\end{equation*}
for all positive integers $n$ and all scalars $a_{ij}.$
\end{lemma}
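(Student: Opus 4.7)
The plan is to reduce Minkowski's inequality to the triangle inequality for the $\ell^{r}$-norm. Set $r := q/p$, so that $r>1$ by hypothesis, and let $b_{ij} := |a_{ij}|^{p} \geq 0$. Raising both sides of the desired inequality to the $p$-th power (a monotone operation on nonnegative reals), the claim becomes
\[
\left(\sum_{j=1}^{n}\left(\sum_{i=1}^{n} b_{ij}\right)^{r}\right)^{\frac{1}{r}} \leq \sum_{i=1}^{n}\left(\sum_{j=1}^{n} b_{ij}^{r}\right)^{\frac{1}{r}}.
\]

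Next, I would interpret each $v_{i} := (b_{ij})_{j=1}^{n}$ as a vector in $\mathbb{R}^{n}$ with nonnegative entries. The left-hand side above is then $\|v_{1}+\cdots+v_{n}\|_{r}$, while the right-hand side is $\|v_{1}\|_{r}+\cdots+\|v_{n}\|_{r}$, so the inequality is an $n$-fold iteration of the triangle inequality in $\ell^{r}$, valid because $r \geq 1$. If a self-contained derivation is preferred, one can write $\bigl(\sum_{i} b_{ij}\bigr)^{r} = \bigl(\sum_{i} b_{ij}\bigr)\bigl(\sum_{i} b_{ij}\bigr)^{r-1}$, distribute the first factor, apply H\"older's inequality with conjugate exponents $r$ and $r/(r-1)$ to the resulting double sum indexed by $j$, and rearrange.

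The main (and essentially only) difficulty is bookkeeping: one has to track the substitution $r=q/p$ through the nested exponents without flipping the direction of the inequality. The hypothesis $p<q$ enters exactly once, in the form $r>1$, which is precisely what allows the triangle inequality in $\ell^{r}$. For $p=q$ the statement reduces to an equality, and the proof works verbatim with integrals in place of sums, so this is the familiar Minkowski integral inequality specialized to counting measure on $\{1,\ldots,n\}$.
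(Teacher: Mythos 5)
The paper states Lemma~\ref{min} without proof, citing it simply as the classical Minkowski inequality for double sums. Your derivation is correct and is the standard one: substituting $r=q/p>1$ and $b_{ij}=|a_{ij}|^p$ and then raising both sides to the $p$-th power reduces the claim precisely to the triangle inequality $\|v_1+\cdots+v_n\|_r\leq\sum_i\|v_i\|_r$ in $\ell^r$ for the vectors $v_i=(b_{ij})_{j=1}^n$, and your bookkeeping of the exponents checks out.
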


\begin{lemma}[Haagerup, see \protect\cite{nazarov}]
\label{haa}Let $1\leq p\leq2.$ For all sequence of real scalars $(a_{i})$ we
have 
\begin{equation*}
\left( \sum\limits_{i=1}^{n}\left\vert a_{i}\right\vert ^{2}\right)
^{1/2}\leq\left( \left( \frac{2^{\frac{p-2}{2}}\Gamma\left( \frac{p+1}{2}%
\right) }{\Gamma\left( \frac{3}{2}\right) }\right) ^{-1}+o(1)\right) \left(
\int\limits_{0}^{1}\left\vert \sum\limits_{k=1}^{n}r_{i}(t)a_{i}\right\vert
^{p}dt\right) ^{\frac{1}{p}}.
\end{equation*}
\end{lemma}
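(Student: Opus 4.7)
The plan is to adapt the probabilistic method of Alon--Spencer to the higher-dimensional setting and combine it with an iterated application of the Haagerup--Khintchine inequality (Lemma~\ref{haa}) and Minkowski's inequality (Lemma~\ref{min}). First I would randomize the first $m-1$ indices: pick $x^{(1)},\dots,x^{(m-1)}$ independently and uniformly in $\{-1,1\}^n$, and then choose
\[
x^{(m)}_{i_m}=\mathrm{sign}\Bigl(\sum_{i_1,\dots,i_{m-1}} a_{i_1\cdots i_m}\,x^{(1)}_{i_1}\cdots x^{(m-1)}_{i_{m-1}}\Bigr).
\]
Then $\sum a\prod x$ equals $\sum_{i_m}|\sum_{i_1,\dots,i_{m-1}}a_{i_1\cdots i_m}\prod_{k<m}x^{(k)}_{i_k}|$. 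Since the maximum over deterministic $\pm 1$ choices dominates the expectation over the random choices, the problem reduces to lower-bounding
\[
\sum_{i_m=1}^n\mathbb{E}\Bigl|\sum_{i_1,\dots,i_{m-1}=1}^n a_{i_1\cdots i_m}\,x^{(1)}_{i_1}\cdots x^{(m-1)}_{i_{m-1}}\Bigr|.
\]

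The technical heart is then a sharp Khintchine-type inequality for Rademacher chaoses. I would prove, by induction on $r$, that for every tensor $b=(b_{\vec i})$ the associated $r$-chaos satisfies
\[
\mathbb{E}\Bigl|\sum_{\vec i} b_{\vec i}\prod_{k=1}^r x^{(k)}_{i_k}\Bigr|\;\geq\;\Bigl(\,\prod_{\ell=2}^{r+1}\frac{2^{-1/\ell}\,\Gamma\bigl(\tfrac{3\ell-2}{2\ell}\bigr)}{\Gamma(3/2)}+o(1)\Bigr)\|b\|_2.
\]
The base case $r=1$ is precisely Lemma~\ref{haa} with $p=1$. For the inductive step, decompose $Q_{r+1}=\sum_{i_{r+1}} C_{i_{r+1}}\,x^{(r+1)}_{i_{r+1}}$ into the $r$-chaoses $C_{i_{r+1}}$, apply Lemma~\ref{haa} to the conditional expectation in $x^{(r+1)}$ with exponent $p_{r+1}:=2-2/(r+2)$, producing the Haagerup factor $2^{-1/(r+2)}\Gamma((3(r+2)-2)/(2(r+2)))/\Gamma(3/2)$ together with the $\ell^2$-norm of $(C_{i_{r+1}})$. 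Next use Lemma~\ref{min} to swap that $\ell^2$-norm with the $L^{p_{r+1}}$ expectation over the remaining Rademacher variables; this is legitimate because $p_{r+1}\leq 2$. Finally, apply the induction hypothesis to each $\|C_{i_{r+1}}\|$-term, obtaining an $\ell^2$-norm of $\|b(\cdot,i_{r+1})\|_2$, which collapses to $\|b\|_2$. Plugging $\|b(\cdot,i_m)\|_2=n^{(m-1)/2}$ back into Step~1 and summing over $i_m$ multiplies the constant by $n$, while the identity $\sum_{\ell=2}^m 1/\ell=H_m-1=\psi(m+1)+\gamma-1$ rewrites $\prod_{\ell=2}^m 2^{-1/\ell}$ as $2^{1-\psi(m+1)-\gamma}$, giving the stated constant.

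The main obstacle is the coupling between Lemma~\ref{haa} and Lemma~\ref{min} at each iteration. The exponents $p_k=2-2/k$ are carefully tuned: they lie in $[1,2]$, so Minkowski's interchange is admissible and Haagerup's inequality applies sharply; and they increase just fast enough that the induction hypothesis at stage $r$, formulated at level $L^{p_r}$, upgrades without loss to the bound on $\|C_{i_{r+1}}\|_{L^{p_{r+1}}}$ required at stage $r+1$. Tracking the monotonicity of the $L^p$ scale across these transitions, and carefully accumulating the $o(1)$ corrections from Lemma~\ref{haa} over $m-1$ iterations into a single asymptotic $o(1)$ in front of $C_m n^{(m+1)/2}$, constitutes the bulk of the bookkeeping.
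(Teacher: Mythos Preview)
Your proposal does not address the stated lemma at all. Lemma~\ref{haa} is Haagerup's sharp Khintchine inequality for a \emph{single} Rademacher sum: given real scalars $a_1,\dots,a_n$ and $1\le p\le 2$, it asserts
\[
\Bigl(\sum_i |a_i|^2\Bigr)^{1/2}\le \Bigl(\bigl(\tfrac{2^{(p-2)/2}\Gamma((p+1)/2)}{\Gamma(3/2)}\bigr)^{-1}+o(1)\Bigr)\Bigl(\int_0^1\Bigl|\sum_i r_i(t)a_i\Bigr|^p\,dt\Bigr)^{1/p}.
\]
In the paper this is quoted from \cite{nazarov} and not proved; it is a technical input, not a target.

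What you have written is instead a proof sketch for Theorem~\ref{654} (the higher-dimensional unbalancing-lights estimate with the constant $2^{1-\psi(m+1)-\gamma}\prod_{k=2}^m\Gamma(\tfrac{3k-2}{2k})/\Gamma(\tfrac32)$). You say so yourself: ``The base case $r=1$ is precisely Lemma~\ref{haa} with $p=1$,'' and your inductive step ``apply Lemma~\ref{haa} \dots with exponent $p_{r+1}$.'' You are \emph{using} Lemma~\ref{haa} as a black box, not proving it. A proof of Lemma~\ref{haa} would require the analysis of the distribution of $\sum_i r_i a_i$ (the Haagerup/Nazarov--Podkorytov argument computing the optimal Khintchine constants), and none of that appears in your write-up.

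If your intention was actually to prove Theorem~\ref{654}, then your outline is close in spirit to the paper's own argument: both iterate Haagerup's inequality with the exponents $p_k=2-2/k$ and interleave Minkowski (Lemma~\ref{min}) to permute sums, accumulating the product $\prod_{k=2}^m f(2(k-1)/k)$. The paper phrases the iteration in terms of mixed $(\ell^2,\ell^{4/3},\dots)$ norms of the coefficient array and the operator norm $\|A\|$, together with H\"older for mixed sums, whereas you phrase it as an inductive Khintchine bound for Rademacher chaoses followed by a sign choice in the last coordinate; these are two packagings of the same mechanism. But as a proof of the statement actually asked about, your proposal is off target.
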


The next lemma is a well-known consequence of the Krein--Milman Theorem:

\begin{lemma}
\label{krein}For all $m$-linear forms $A:\ell_{\infty}^{n}\times\cdots
\times\ell_{\infty}^{n}\rightarrow\mathbb{R}$ we have%
\begin{equation*}
\left\Vert A\right\Vert =\max\left\vert A\left(
x^{(1)},\ldots,x^{(m)}\right) \right\vert ,
\end{equation*}
where $x^{(j)}$ has all entries equal to $1$ or $-1$, for all $j=1,\ldots,m.$
\end{lemma}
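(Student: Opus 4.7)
The plan is to deduce the equality from the Krein--Milman theorem applied one coordinate at a time, using the multilinearity of $A$. First I would observe that the closed unit ball of $\ell_\infty^n$ is the hypercube $[-1,1]^n$, which is a compact convex polytope in $\mathbb{R}^n$ whose extreme points are exactly the $2^n$ vertices in $\{-1,+1\}^n$. Since $A$ is an $m$-linear form on a finite-dimensional space it is continuous, and $B_{\ell_\infty^n}\times\cdots\times B_{\ell_\infty^n}$ is compact, so the supremum defining $\|A\|$ is attained at some tuple $(x^{(1)},\ldots,x^{(m)})$ with each $\|x^{(j)}\|_\infty\le 1$.

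Next I would show that each coordinate of this maximizer can be replaced by an extreme point without decreasing $|A|$. Fix $j\in\{1,\ldots,m\}$ and freeze the other arguments; the map
\[
\varphi_j:u\longmapsto A\bigl(x^{(1)},\ldots,x^{(j-1)},u,x^{(j+1)},\ldots,x^{(m)}\bigr)
\]
is a linear functional on $\ell_\infty^n$, so $|\varphi_j|$ is the pointwise maximum of the two linear functionals $\pm\varphi_j$ and is therefore convex. A convex function on a compact convex polytope attains its maximum at an extreme point, hence there exists $\tilde x^{(j)}\in\{-1,+1\}^n$ with $|\varphi_j(\tilde x^{(j)})|\ge |\varphi_j(x^{(j)})|$.

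I would then iterate this replacement for $j=1,2,\ldots,m$; at each step the absolute value of $A$ does not decrease, and after the $m$-th step all arguments lie in $\{-1,+1\}^n$. The resulting value is at most the right-hand side of the claimed identity and, by the initial choice of $(x^{(1)},\ldots,x^{(m)})$, at least $\|A\|$. Conversely, any tuple of $\pm 1$-vectors lies in the unit ball, so the maximum over such tuples is trivially bounded above by $\|A\|$, and equality follows. There is no real obstacle here: the argument is essentially Krein--Milman combined with the convexity of $|\varphi_j|$, and the mild subtlety of working with $|A|$ rather than $A$ is handled by writing $|\varphi_j|=\max(\varphi_j,-\varphi_j)$.
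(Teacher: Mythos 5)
Your argument is correct and is exactly the standard one the paper invokes: the lemma is stated there as a well-known consequence of the Krein--Milman theorem with no further details, and your coordinate-by-coordinate replacement (using that $u\mapsto |A(\ldots,u,\ldots)|$ is a maximum of two linear functionals, hence convex, and that the extreme points of $B_{\ell_\infty^n}$ are the sign vectors) is precisely the intended proof. Nothing is missing.
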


Now we are able to begin the proof. Let%
\begin{equation*}
f(p):=\left( \frac{2^{\frac{p-2}{2}}\Gamma\left( \frac{p+1}{2}\right) }{%
\Gamma\left( p\right) }\right) ^{-1}.
\end{equation*}
Consider the $m$-linear form 
\begin{equation*}
A\left( x^{(1)},\ldots,x^{(m)}\right) =\sum\limits_{i,j=1}^{n}a_{i_{1}\cdots
i_{m}}x_{i_{1}}^{(1)}\cdots x_{i_{m}}^{(m)}.
\end{equation*}
For bilinear forms, using Lemma \ref{haa}, we have%
\begin{align}  \label{n8}
\sum\limits_{j=1}^{n}\left( \sum\limits_{i=1}^{n}\left\vert
a_{ij}\right\vert ^{2}\right) ^{1/2} & =\sum\limits_{j=1}^{n}\left(
\sum\limits_{i=1}^{n}\left\vert A\left( e_{i},e_{j}\right) \right\vert
^{2}\right) ^{1/2} \\
& \leq\left( f(1)+o(1)\right)
\sum\limits_{j=1}^{n}\int\limits_{0}^{1}\left\vert
\sum\limits_{i=1}^{n}r_{i}(t)A\left( e_{i},e_{j}\right) \right\vert dt 
\notag \\
& \leq\left( f(1)+o(1)\right)
\sup_{t\in\lbrack0,1]}\sum\limits_{j=1}^{n}\left\vert A\left(
\sum\limits_{i=1}^{n}r_{i}(t)e_{i},e_{j}\right) \right\vert  \notag \\
& \leq\left( f(1)+o(1)\right) \left\Vert A\right\Vert .  \notag
\end{align}
and, by symmetry and by Lemma \ref{min} we have%
\begin{equation}  \label{n9}
\left( \sum\limits_{j=1}^{n}\left( \sum\limits_{i=1}^{n}\left\vert
a_{ij}\right\vert \right) ^{2}\right) ^{1/2}\leq\left( \left( 2^{-1/2}\frac{%
\Gamma(1)}{\Gamma\left( 3/2\right) }\right) ^{-1}+o(1)\right) \left\Vert
A\right\Vert .
\end{equation}
By the H\"{o}lder inequality for mixed sums combined with \eqref{n8} and %
\eqref{n9}, we have 
\begin{equation*}
\left( \sum_{i,j=1}^{n}\left\vert a_{ij}\right\vert ^{\frac{4}{3}}\right) ^{%
\frac{3}{4}}\leq\left( f(1)+o(1)\right) \left\Vert A\right\Vert .
\end{equation*}
For trilinear forms we have%
\begin{align}
& \left( \sum\limits_{k=1}^{n}\left( \sum\limits_{i,j=1}^{n}\left\vert
a_{ijk}\right\vert ^{2}\right) ^{\frac{1}{2}\times\frac{4}{3}}\right) ^{%
\frac{3}{4}}  \label{aa} \\
& \leq\left( f(4/3)+o(1)\right) \left(
\sum\limits_{k=1}^{n}\int\limits_{0}^{1}\left\vert
\sum\limits_{k=1}^{n}r_{k}(t)A\left( e_{i},e_{j},e_{k}\right) \right\vert ^{%
\frac{4}{3}}dt\right) ^{\frac{3}{4}}  \notag \\
& \leq\left( f(4/3)+o(1)\right) \left( f(1)+o(1)\right) \left\Vert
A\right\Vert  \notag \\
& =\left( f(1)f(4/3)+o(1)\right) \left\Vert A\right\Vert .  \notag
\end{align}
Using symmetry and Lemma \ref{min} we have%
\begin{equation}
\left( \sum\limits_{k,i=1}^{n}\left( \sum\limits_{j=1}^{n}\left\vert
a_{ijk}\right\vert ^{\frac{4}{3}}\right) ^{\frac{3}{4}\times2}\right) ^{%
\frac{1}{2}}\leq\left( f(1)f(4/3)+o(1)\right) \left\Vert A\right\Vert
\label{bb}
\end{equation}
and%
\begin{equation}
\left( \sum\limits_{k=1}^{n}\left( \sum\limits_{i=1}^{n}\left(
\sum\limits_{j=1}^{n}\left\vert a_{ijk}\right\vert ^{2}\right) ^{\frac{1}{2}%
\times\frac{4}{3}}\right) ^{\frac{3}{4}\times2}\right) ^{\frac{1}{2}%
}\leq\left( f(1)f(4/3)+o(1)\right) \left\Vert A\right\Vert .  \label{cc}
\end{equation}
By the H\"{o}lder inequality for mixed sums and \eqref{aa}, \eqref{bb}, \eqref{cc} we get
\begin{align*}
\left( \sum\limits_{i,j,k=1}^{n}\left\vert a_{ijk}\right\vert ^{3/2}\right)
^{\frac{2}{3}} & \leq\left( f(4/3)+o(1)\right) \left( f(1)+o(1)\right)
\left\Vert A\right\Vert \\
& =\left( f(1)f(4/3)+o(1)\right) \left\Vert A\right\Vert .
\end{align*}
Following this vein, for the general case we have%
\begin{align*}
\left( \sum\limits_{i_{1},\ldots,i_{m}=1}^{n}\left\vert a_{i_{1}\cdots
i_{m}}\right\vert ^{\frac{2m}{m+1}}\right) ^{\frac{m+1}{2m}} & \leq
\prod\limits_{k=2}^{m}\left( f\left( \frac{2\left( k-1\right) }{k}%
\right)+o(1)\right) \left\Vert A\right\Vert \\
& =\left( \left( \prod\limits_{k=2}^{m}f\left( \frac{2\left( k-1\right)}{k}%
\right) \right) +o(1)\right) \left\Vert A\right\Vert.
\end{align*}
We thus conclude that there exist $x_{i_{j}}=\pm1$, $1\leq j\leq n$, such
that 
\begin{align*}
\sum\limits_{i_{1},\ldots,i_{m}=1}^{n}a_{i_{1}\cdots
i_{m}}x_{i_{1}}^{(1)}\cdots x_{i_{m}}^{(m)} & \geq\left( \left(
\prod\limits_{k=2}^{m}f\left(\frac{2\left( k-1\right) }{k}\right) \right)
^{-1}+o(1)\right)n^{\frac{m+1}{2}} \\
& =\left( \prod\limits_{k=2}^{m}\left( \frac{2^{-1/k}\Gamma\left(\frac{3k-2}{%
2k}\right) }{\Gamma\left( \frac{3}{2}\right) }\right)+o(1)\right) n^{\frac{%
m+1}{2}} \\
&=\left( 2^{1-\psi\left( m+1\right) -\gamma}\left(\prod\limits_{k=2}^{m}%
\frac{\Gamma\left( \frac{3k-2}{2k}\right) }{\Gamma\left( \frac{3}{2}\right) }%
\right) +o(1)\right) n^{\frac{m+1}{2}},
\end{align*}
where $\psi$ is the digamma function and $\gamma$ is the Euler-Mascheroni
constant. The optimality of the exponent $\frac{m+1}{2}$ can be proved, as
usual, using the Kahane--Salem--Zygmund inequality.

Observing that Lemma \ref{haa} holds for all sequence of real scalars $%
(a_{i}),$ the argument of the previous section can be adapted to prove the
following version, with asymptotic constants, of the Bohnenblust--Hille
inequality:

\begin{theorem}
For all continuous $m$-linear forms $T:c_{0}\times\cdots\times
c_{0}\rightarrow\mathbb{R}$ we have 
\begin{equation}  \label{ne}
\left(\sum\limits_{i_{1},\ldots,i_{m}=1}^{n}\left\vert T\left(
e_{i_{1}},\ldots,e_{i_{m}}\right) \right\vert ^{\frac{2m}{m+1}}\right) ^{%
\frac{m+1}{2m}}\leq\left( \frac{1}{2^{1-\psi\left(m+1\right) -\gamma}}\left(
\prod\limits_{k=2}^{m}\frac{\Gamma\left(\frac{3}{2}\right)}{\Gamma\left(%
\frac{3k-2}{2k}\right)}\right)+o(1)\right) \left\Vert T\right\Vert.
\end{equation}
\end{theorem}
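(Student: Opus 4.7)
The plan is to run precisely the same chain of inequalities used in the proof of Theorem \ref{654}, but starting from the general real coefficients $a_{i_{1}\cdots i_{m}}:=T(e_{i_{1}},\ldots,e_{i_{m}})$ instead of unimodular ones. The justification for this transfer is flagged in the paragraph preceding the statement: Haagerup's inequality (Lemma \ref{haa}) holds for \emph{any} sequence of real scalars, which is exactly the only place where unimodularity was used in the previous section. Moreover, since $T$ is defined on $c_{0}$ rather than on $\ell_{\infty}^{n}$, no appeal to Lemma \ref{krein} is needed: the Rademacher vector $r(t)=\sum_{i}r_{i}(t)e_{i}$ automatically lies in the closed unit ball of $c_{0}$, so that $|T(r(t),\ldots)|$ is controlled directly by $\|T\|$.

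First I would handle the base case $m=2$. Applying Lemma \ref{haa} to the sequence $(T(e_{i},e_{j}))_{i}$ with $j$ fixed yields
\[
\Bigl(\sum_{i=1}^{n}|T(e_{i},e_{j})|^{2}\Bigr)^{1/2}\leq (f(1)+o(1))\int_{0}^{1}|T(r(t),e_{j})|\,dt.
\]
Summing over $j$ and commuting the finite sum with the integral gives the analogue of \eqref{n8}; by symmetry (swapping the roles of $i$ and $j$) the analogue of \eqref{n9} holds as well, and Hölder for mixed sums interpolates the two estimates to produce the asymptotic Littlewood $4/3$-inequality on $c_{0}\times c_{0}$.

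For the inductive step from $m-1$ to $m$, I would apply Lemma \ref{haa} in the innermost variable (regarding all other variables as parameters), then interleave Minkowski's inequality (Lemma \ref{min}) with Hölder for mixed sums exactly as in the chain \eqref{aa}--\eqref{cc}. At the $k$-th level of the induction, the only constant produced is a new Haagerup factor $f\bigl(\tfrac{2(k-1)}{k}\bigr)+o(1)$, and the symmetry of the $m$ variables lets Lemma \ref{min} permute exponents without generating further constants. After $m-1$ such steps one arrives at
\[
\Bigl(\sum_{i_{1},\ldots,i_{m}=1}^{n}|T(e_{i_{1}},\ldots,e_{i_{m}})|^{\frac{2m}{m+1}}\Bigr)^{\frac{m+1}{2m}}\leq\Bigl(\prod_{k=2}^{m}f\bigl(\tfrac{2(k-1)}{k}\bigr)+o(1)\Bigr)\|T\|,
\]
and the algebraic identity $\sum_{k=1}^{m}\tfrac{1}{k}=\psi(m+1)+\gamma$ together with the definition of $f$ rewrites this product as the stated digamma expression.

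The main obstacle is essentially bookkeeping. I expect two small technical points. First, one must verify that the Hölder-for-mixed-sums step at each level of the induction genuinely collapses the running exponents to $\tfrac{2m}{m+1}$, which is the same arithmetic underlying the standard inductive proof of Bohnenblust--Hille but has to be re-checked with the Haagerup constants in place of the usual Khintchine constants. Second, one must make sure that the $m-1$ error terms $o(1)$ arising from the successive applications of Lemma \ref{haa} can be collected into a single $o(1)$ in the final bound; this is fine since $m$ is fixed. Beyond these issues, the argument is a verbatim transcription of the proof of Theorem \ref{654}, with $\|T\|$ replacing the explicit supremum bound used there.
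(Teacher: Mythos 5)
Your proposal is correct and reproduces precisely the route the paper takes: the paper's own "proof" is the single remark preceding the statement that Lemma \ref{haa} holds for arbitrary real scalars, so the inductive chain from Theorem \ref{654} (Haagerup $\to$ Minkowski $\to$ H\"older for mixed sums, level by level) transfers verbatim, stopping at the penultimate display before the unimodular normalization $\sum |a|^{2m/(m+1)} = n^m$ is invoked. Your observation that Lemma \ref{krein} was only needed to recover the $\pm 1$ switching vectors at the end of Theorem \ref{654}, and plays no role in the norm estimate itself, is also accurate.
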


\medskip

\begin{center}
\begin{tabular}{|c|c|}
\hline
& Value of $\frac{1}{2^{1-\psi\left( m+1\right) -\gamma}}\left(
\prod\limits_{k=2}^{m}\frac{\Gamma\left( \frac{3}{2}\right) }{\Gamma\left( 
\frac{3k-2}{2k}\right) }\right) $ \\ \hline
$m=2$ & $\sqrt{\pi/2}\approx1.2533$ \\ \hline
$m=5$ & $1.9895$ \\ \hline
$m=10$ & $3.0555$ \\ \hline
$m=100$ & $15.2457$ \\ \hline
$m=1000$ & $81.1974$ \\ \hline
\end{tabular}
\end{center}

\medskip

From \eqref{ne} and repeating the proof of Theorem \ref{ha} we
have:

\begin{theorem}
Let $p\in\lbrack2,\infty]$. For all unimodular $m$-linear forms $T:\ell
_{p}^{n}\times\cdots\times\ell_{p}^{n}\rightarrow\mathbb{R}$ we have%
\begin{equation*}
\left(\sum\limits_{i_{1},\ldots,i_{m}=1}^{n}\left\vert T\left(
e_{i_{1}},\ldots,e_{i_{m}}\right) \right\vert ^{\frac{2mp}{mp+p-2m}}\right)
^{\frac{mp+p-2m}{2mp}}\leq\left( \frac{1}{2^{1-\psi\left( m+1\right) -\gamma}%
}\left( \prod\limits_{k=2}^{m}\frac{\Gamma\left( \frac{3}{2}\right) }{%
\Gamma\left( \frac{3k-2}{2k}\right) }\right) +o(1)\right) \left\Vert
T\right\Vert.
\end{equation*}
\end{theorem}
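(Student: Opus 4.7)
The strategy is to rerun the proof of Theorem~\ref{ha} verbatim, replacing the crude Bohnenblust--Hille constant $1.3m^{0.365}$ by the sharper asymptotic constant
\[
K_m := \frac{1}{2^{1-\psi(m+1)-\gamma}}\prod_{k=2}^{m}\frac{\Gamma(3/2)}{\Gamma\left(\frac{3k-2}{2k}\right)}
\]
supplied by \eqref{ne}. Note that \eqref{ne} was established for real scalars, which matches the hypothesis $\mathbb{K}=\mathbb{R}$ of the target statement.

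First, I would invoke the isometric identification $\ell_{1}^{w}(c_{0})\cong\ell_{\infty}$ used at the opening of the proof of Theorem~\ref{ha}. This upgrades \eqref{ne} to the statement that every continuous real $m$-linear form is multiple $(\tfrac{2m}{m+1};1)$-summing with constant dominated by $K_m+o(1)$. Applied to the given unimodular form $T:\ell_{p}^{n}\times\cdots\times\ell_{p}^{n}\to\mathbb{R}$ with the standard basis $e_{j}$ in each slot, this yields
\[
\left(\sum_{j_{1},\ldots,j_{m}=1}^{n}\left|T(e_{j_{1}},\ldots,e_{j_{m}})\right|^{\frac{2m}{m+1}}\right)^{\frac{m+1}{2m}}\leq (K_m+o(1))\|T\|\left(\sup_{\varphi\in B_{\ell_{p^{\ast}}^{n}}}\sum_{j=1}^{n}|\varphi_{j}|\right)^{m},
\]
and the inner supremum equals $n^{1/p}$ by H\"older.

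Since $T$ is unimodular the left-hand side above equals $n^{(m+1)/2}$, so rearranging gives $\|T\|\geq(K_m+o(1))^{-1}n^{(mp+p-2m)/(2p)}$. On a unimodular array the left-hand side of the target inequality is $(n^{m})^{(mp+p-2m)/(2mp)}=n^{(mp+p-2m)/(2p)}$, so the two inequalities are algebraically identical and the result follows. There is no genuine obstacle beyond bookkeeping; the only point to verify is that the $o(1)$ correction in \eqref{ne} passes unchanged through the (isometric and dimension-free) identification and the H\"older step. The restriction $p\geq 2$ is forced by the regime in which the Kahane--Salem--Zygmund exponent in Theorem~\ref{kszi} matches the target one, as illustrated in Figure~\ref{ddssdds}.
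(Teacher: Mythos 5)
Your proposal reproduces exactly the paper's intended argument, which is to rerun the proof of Theorem~\ref{ha} with $1.3m^{0.365}$ replaced by the asymptotic constant furnished by~\eqref{ne}; each step (the multiple $(\tfrac{2m}{m+1};1)$-summing reformulation, the H\"older evaluation of the weak-$\ell_1$ norm as $n^{1/p}$, and the unimodularity identity $(n^m)^{(m+1)/(2m)}=n^{(m+1)/2}$) is carried out correctly. The only minor inaccuracy is the closing remark: the restriction $p\ge 2$ is not actually forced by the argument nor by Kahane--Salem--Zygmund (which plays no role here since no sharpness is claimed in this statement); the inequality itself holds for all $p>\tfrac{2m}{m+1}$ exactly as in Theorem~\ref{ha}, and the range $[2,\infty]$ merely mirrors the region where the exponent is known to be optimal.
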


\section{Blow up rate for the Hardy--Littlewood inequalities for unimodular forms}

In this section we provide the blow up rate for the constants in Theorem \ref{ha} as $n$ grows when the $\ell_{\frac{2mp}{mp+p-2m}}$-norm in the left-hand-side is replaced by an $\ell_r$-norm with $0 < r < \infty$. More precisely, we prove the following result:

\begin{theorem}
If $m$ is a positive integers and $(r,p)\in (0,\infty)\times \left(\frac{2m}{m+1},\infty\right]$ then
\[
\left(\sum_{j_{1},\ldots,j_{m}=1}^{n}\left\vert
T(e_{j_{1}},\ldots,e_{j_{m}})\right\vert ^{r}\right) ^{\frac{1}{r}}\leq 1.3m^{0.365}n^{\max\{\frac{2mr+2mp-mpr-pr}{2pr},0\}}\left\Vert T\right\Vert
\]
for all unimodular $m$-linear forms $T:\ell_{p}^{n}\times\cdots\times\ell_{p}^{n}\rightarrow\mathbb{K}$ and all positive integers $n$. Moreover, for $(r,p)$ belonging to $\left(\left(0,\frac{2mp}{mp+p-2m}\right)\times \left[2,\infty\right]\right)\cup\left( \left[\frac{2mp}{mp+p-2m},\infty\right)\times \left(\frac{2m}{m+1},\infty\right]\right)$ the power $\max\{\frac{2mr+2mp-mpr-pr}{2pr},0\}$ is sharp and for $(r,p)$ belonging to $\left(0,\frac{2mp}{mp+p-2m}\right)\times \left(\frac{2m}{m+1},2\right)$ the optimal exponent of $n$ belongs to the interval  $\left[\max\{\frac{mp+r-pr}{pr},0\},\frac{2mr+2mp-mpr-pr}{2pr}\right]$.
\end{theorem}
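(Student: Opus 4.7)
The plan is to reduce everything to Theorem~\ref{ha} together with the Kahane--Salem--Zygmund inequality (Theorem~\ref{kszi}), exploiting the fact that for any unimodular $T$ one has $|T(e_{j_1},\ldots,e_{j_m})|=1$ and therefore
\[
\left(\sum_{j_1,\ldots,j_m=1}^n |T(e_{j_1},\ldots,e_{j_m})|^r\right)^{1/r}=n^{m/r}
\]
for every $r>0$. Writing $r_0:=\tfrac{2mp}{mp+p-2m}$, Theorem~\ref{ha} already gives $n^{m/r_0}\leq 1.3\,m^{0.365}\,\|T\|$. For $r\geq r_0$, the monotonicity $n^{m/r}\leq n^{m/r_0}$ immediately yields the desired inequality with exponent $0$. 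For $r<r_0$, I would multiply both sides of the bound from Theorem~\ref{ha} by $n^{m/r-m/r_0}$; a routine algebraic identity rewrites $m/r-m/r_0$ as $\tfrac{2mr+2mp-mpr-pr}{2pr}$, giving the inequality with the stated positive exponent. The two cases combine into $\max\{\tfrac{2mr+2mp-mpr-pr}{2pr},0\}$.

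For the optimality, the strategy is to substitute a Kahane--Salem--Zygmund form into the putative inequality with an arbitrary exponent $\beta$ in place of the claimed one, and let $n\to\infty$. Theorem~\ref{kszi} furnishes unimodular $m$-linear forms $A$ with $\|A\|\leq C_m\,n^{\eta_p}$, where $\eta_p=\max\{\tfrac{mp+p-2m}{2p},\,1-\tfrac{1}{p}\}$, the first entry of the max dominating exactly when $p\geq 2$. For $r<r_0$ and $p\in[2,\infty]$, plugging such an $A$ into the putative inequality forces $\beta\geq m/r-\tfrac{mp+p-2m}{2p}=\tfrac{2mr+2mp-mpr-pr}{2pr}$, matching the upper bound. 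For $r<r_0$ and $p\in(\tfrac{2m}{m+1},2)$, the same test with the other KSZ exponent produces $\beta\geq m/r-(1-\tfrac{1}{p})=\tfrac{mp+r-pr}{pr}$; combined with the upper bound this gives the interval $[\max\{\tfrac{mp+r-pr}{pr},0\},\tfrac{2mr+2mp-mpr-pr}{2pr}]$ asserted in the theorem.

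The sharpness for $r\geq r_0$ with $p\in(\tfrac{2m}{m+1},\infty]$ I would obtain by specialising to the boundary $r=r_0$: Theorem~\ref{ha} already contains the sharpness of $r_0$, so no strictly negative exponent of $n$ can produce a universal bound on that boundary, and hence on the half-line $r\geq r_0$ either. The main obstacle is precisely this boundary regime---a direct Kahane--Salem--Zygmund test matches the claimed exponent only at $r=r_0$ itself and degenerates beyond it, so one must cite the already-established sharpness in Theorem~\ref{ha} rather than construct a new family of extremal unimodular forms.
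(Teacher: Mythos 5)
Your proposal is correct and follows essentially the same route as the paper: reduce to Theorem~\ref{ha}, then invoke the Kahane--Salem--Zygmund inequality (Theorem~\ref{kszi}) to bound the optimal exponent from below. The one pleasant difference is that you observe at the outset that for unimodular $T$ every summand has modulus exactly one, so $\bigl(\sum_{j_1,\ldots,j_m}|T(e_{j_1},\ldots,e_{j_m})|^r\bigr)^{1/r}=n^{m/r}$ identically; this replaces the paper's use of H\"older's inequality (to pass from the $\ell_{r_0}$- to the $\ell_r$-norm) by an equality, which is a genuinely cleaner bookkeeping device even though the outcome is the same. Your algebraic identifications $m/r-m/r_0=\frac{2mr+2mp-mpr-pr}{2pr}$ and $m/r-(1-\tfrac{1}{p})=\frac{mp+r-pr}{pr}$ are correct, and your case analysis matches the theorem exactly. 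One caveat: for the sharpness claim on the half-line $r\geq r_0$, you lean on ``Theorem~\ref{ha} already contains the sharpness of $r_0$, hence the half-line is covered too,'' but the sharpness of the \emph{summability exponent} $r_0$ in Theorem~\ref{ha} does not logically transfer to sharpness of the \emph{exponent of $n$} at parameters $r>r_0$ (the KSZ test there only gives $s\geq m/r-m/r_0<0$). The paper itself disposes of this regime with the equally terse assertion that ``no negative exponent of $n$ is possible,'' so your treatment is no less rigorous than the source, but you should be aware that neither argument, as phrased, closes this case.
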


\begin{proof}
For $p>\frac{2m}{m+1}$ we know from Theorem \ref{ha} that 
\begin{equation}
\left( \sum_{j_{1},\ldots,j_{m}=1}^{n}|T(e_{j_{1}},\ldots,e_{j_{m}})| ^{\frac{2mp}{mp+p-2m}}\right)^{\frac{mp+p-2m}{2mp}}\leq 1.3m^{0.365}\left\Vert
T\right\Vert.  \label{43333}
\end{equation}
Therefore, if $(r,p) \in \left(0,\frac{2mp}{mp+p-2m}\right)\times \left(\frac{2m}{m+1},\infty\right]$, from H\"{o}lder's inequality and \eqref{43333} we have 
\begin{align*}
& \left( \sum_{j_{1},\ldots,j_{m}=1}^{n}\left\vert T(e_{j_{1}},\ldots,e_{j_{m}})\right\vert ^{r}\right) ^{\frac{1}{r}} \\
& \leq \left( \sum\limits_{j_{1},\ldots,j_{m}=1}^{n}\left\vert T(e_{j_{1}},\ldots,e_{j_{m}})\right\vert ^{\frac{2mp}{mp+p-2m}}\right) ^{\frac{mp+p-2m}{2mp}} \left(\sum_{j_{1},\ldots,j_{m}=1}^{n}\left\vert 1\right\vert ^{\frac{2mpr}{2mp+2mr-mpr-pr}}\right) ^{\frac{2mp+2mr-mpr-pr}{2mpr}} \\
& \leq 1.3m^{0.365}\left\Vert T\right\Vert \left( n^{m}\right) ^{\frac{2mp+2mr-mpr-pr}{2mpr}} \\
& = 1.3m^{0.365} n^{\frac{2mr+2mp-mpr-pr}{2pr}}\left\Vert T\right\Vert.
\end{align*}

Let us prove the optimality of the exponents for $(r,p) \in \left(0,\frac{2mp}{mp+p-2m}\right)\times \left[2,\infty\right]$. Suppose that the theorem is valid for an exponent $s$, i.e., 
\[
\left( \sum_{j_{1},\ldots,j_{m}=1}^{n}\left\vert
T(e_{j_{1}},\ldots,e_{j_{m}})\right\vert ^{r}\right) ^{\frac{1}{r}}\leq 1.3m^{0.365}n^{s}\left\Vert T\right\Vert .
\]
Since $p\geq 2$, from the Kahane--Salem--Zygmund inequality (Theorem \ref{kszi}) we have 
\begin{equation*}
n^{\frac{m}{r}}\leq 1.3m^{0.365}n^{s}C_{m}n^{\frac{1}{2}+m\left(\frac{1}{2}-\frac{1}{p}\right)}=C_m1.3m^{0.365}n^{s+\frac{mp+p-2m}{2p}}
\end{equation*}
and thus, making $n\rightarrow \infty $, we obtain $s\geq \frac{2mr+2mp-mpr-pr}{2pr}$.

If $\left(r,p\right) \in \left[\frac{2mp}{mp+p-2m},\infty \right)\times \left(\frac{2m}{m+1},\infty\right]$ we have $\frac{2mr+2mp-mpr-pr}{2pr} \leq 0$ and 
\begin{align*}
\left( \sum_{j_{1},\ldots,j_{m}=1}^{n}\left\vert
T(e_{j_{1}},\ldots,e_{j_{m}})\right\vert ^{r}\right) ^{\frac{1}{r}} & \leq	\left(\sum_{j_{1},\ldots,j_{m}=1}^{n}\left\vert
T(e_{j_{1}},\ldots,e_{j_{m}})\right\vert ^{\frac{2mp}{mp+p-2m}}\right) ^{%
\frac{mp+p-2m}{2mp}} \\
& \leq 1.3m^{0.365}\left\Vert T\right\Vert \\
& = 1.3m^{0.365}n^{\max \left\{\frac{2mr+2mp-mpr-pr}{2pr},0\right\}}\left%
\Vert T\right\Vert.
\end{align*}

In this case the optimality of the exponent $\max \left\{\frac{2mr+2mp-mpr-pr}{2pr},0\right\}$ is immediate, since no negative exponent of $n$ is possible.

If $(r,p) \in \left(0,\frac{2mp}{mp+p-2m}\right)\times \left(\frac{2m}{m+1},2\right)$, we just have an estimate for the optimal exponent of $n$. In fact, suppose that the inequalities are valid for an exponent $s\geq 0$, i.e.,
\begin{equation*}
\left( \sum_{j_{1},\ldots,j_{m}=1}^{n}\left\vert
T(e_{j_{1}},\ldots,e_{j_{m}})\right\vert ^{r}\right) ^{\frac{1}{r}}\leq
1.3m^{0.365}n^{s}\left\Vert T\right\Vert.
\end{equation*}
Since $1\leq \frac{2m}{m+1} < p\leq 2$, from the Kahane--Salem--Zygmund inequality (Theorem \ref{kszi}) we have 
\[
n^{\frac{m}{r}}\leq 1.3m^{0.365}n^{s}C_{m}n^{1-\frac{1}{p}}=1.3m^{0.365}C_{m}n^{s+\frac{p-1}{p}}
\]
and thus, making $n\rightarrow \infty $, we obtain $s\geq \frac{mp+r-pr}{pr}$.
\end{proof}

If Conjecture \ref{conjecture} is correct, using the same ideas of the proof of the previous theorem it is possible to improve it to the following optimal result:

\begin{conjecture}
If $m$ is a positive integers and $(r,p)\in (0,\infty)\times (1,\infty]$ then there is a constant $K_{m}$ such that 
\begin{equation*}
\left\vert 
\begin{array}{l}
\displaystyle\left( \sum_{j_{1},\ldots,j_{m}=1}^{n}\left\vert
T(e_{j_{1}},\ldots,e_{j_{m}})\right\vert ^{r}\right) ^{\frac{1}{r}}\leq
K_{m}n^{\max\{\frac{mp+r-pr}{pr},0\}}\left\Vert T\right\Vert\text{ for }1< p\leq2, \\
\displaystyle\left(\sum_{j_{1},\ldots,j_{m}=1}^{n}\left\vert
T(e_{j_{1}},\ldots,e_{j_{m}})\right\vert ^{r}\right) ^{\frac{1}{r}}\leq
1.3m^{0.365}n^{\max\{\frac{2mr+2mp-mpr-pr}{2pr},0\}}\left\Vert T\right\Vert\text{ for }p\geq2,
\end{array}
\right.
\end{equation*}
for all unimodular $m$-linear forms $T:\ell_{p}^{n}\times\cdots\times%
\ell_{p}^{n}\rightarrow\mathbb{K}$ and all positive integers $n$. Moreover,
the exponents $\max\{\frac{2mr+2mp-mpr-pr}{2pr},0\}$ and $\max\{\frac{mp+r-pr%
}{pr},0\}$ are sharp.
\end{conjecture}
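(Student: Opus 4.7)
The plan is to run exactly the same argument as in the (unconditional) blow-up-rate theorem immediately preceding the conjecture, but now starting from the two endpoint estimates supplied by Conjecture \ref{conjecture}: the $\ell_{\frac{2mp}{mp+p-2m}}$-inequality for $p\geq 2$ and the $\ell_{\frac{mp}{p-1}}$-inequality for $1<p\leq 2$. Beyond these two inputs, everything is a routine H\"{o}lder interpolation together with a Kahane--Salem--Zygmund pairing, so the proof splits along the same $p\geq 2$ versus $1<p\leq 2$ dichotomy and, within each range, into subcases determined by whether the claimed exponent of $n$ vanishes or is strictly positive.

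For the upper bound, set $s=s(m,p)$ to be the appropriate endpoint exponent. If $r\geq s$, then on the $n^m$-element index set the $\ell_r$-norm is dominated by the $\ell_s$-norm (normalise the sequence in $\ell_s$, note its entries are bounded by $1$, and use $r\geq s$), so the endpoint estimate from Conjecture \ref{conjecture} already gives the desired inequality with $n^{0}=1$, which is exactly the $\max\{\cdot,0\}=0$ regime. If $r<s$, I would apply H\"{o}lder's inequality on the counting measure with conjugate pair $(s/r,\,s/(s-r))$, producing the factor $(n^m)^{1/r-1/s}=n^{m(1/r-1/s)}$. A one-line calculation identifies this factor with $n^{\frac{2mr+2mp-mpr-pr}{2pr}}$ in the $p\geq 2$ case and with $n^{\frac{mp+r-pr}{pr}}$ in the $1<p\leq 2$ case, and these powers are strictly positive precisely when $r<s$, so the $\max\{\cdot,0\}$ formulation is automatically consistent.

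For sharpness I would run the very same Kahane--Salem--Zygmund argument that appears in Theorem \ref{ha} and in the unconditional blow-up-rate theorem. Suppose the inequality held with some exponent $t$ of $n$; testing against a KSZ form (which is unimodular) gives $n^{m/r}\leq K_m\, n^{t}\,\|T\|$. Inserting $\|T\|\lesssim n^{\frac{mp+p-2m}{2p}}$ for $p\geq 2$ and $\|T\|\lesssim n^{\frac{p-1}{p}}$ for $1<p\leq 2$ (Theorem \ref{kszi}), and then letting $n\to\infty$, reproduces exactly the two conjectured exponents. When these lower bounds on $t$ come out negative, sharpness of the $\max\{\cdot,0\}=0$ assertion is trivial because no strictly negative power of $n$ can upper bound a positive left-hand side.

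The main obstacle is not in any of the steps above, all of which are direct adaptations of arguments already present in the paper; the entire difficulty is concentrated in Conjecture \ref{conjecture} itself. In particular, the $1<p\leq 2$ endpoint is equivalent to establishing that every continuous $m$-linear form on $\ell_{p}\times\cdots\times\ell_{p}$ is multiple $\bigl(\frac{mp}{m+p-1};1\bigr)$-summing with a constant depending only on $m$, which the authors flag as a long-standing open interpolation problem linked to vector-valued Bennett-type questions that remain open even in the linear case.
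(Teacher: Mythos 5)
Your proposal follows essentially the same route the paper itself sketches after stating the conjecture: assume Conjecture \ref{conjecture}, deduce the $1<p\leq 2$ bound by H\"older against the endpoint $\ell_{\frac{mp}{p-1}}$-estimate (the $p\geq 2$ bound being the already-proved unconditional blow-up theorem), handle the $r\geq s$ regime by the trivial $\ell_s\hookrightarrow\ell_r$ embedding, and obtain sharpness by pairing a Kahane--Salem--Zygmund form with the two norm bounds $\|T\|\lesssim n^{\frac{mp+p-2m}{2p}}$ (for $p\geq 2$) and $\|T\|\lesssim n^{\frac{p-1}{p}}$ (for $1<p\leq 2$). The only small imprecision is the word ``equivalent'' in the last paragraph: the paper only asserts that multiple $(\frac{mp}{m+p-1};1)$-summability of $m$-linear forms on $\ell_p$ \emph{would suffice} to prove the $1<p\leq 2$ endpoint of Conjecture \ref{conjecture}, not that it is equivalent to it; otherwise your reasoning matches the paper's.
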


In fact, the novelty is the case $1 < p\leq 2$. Supposing that Conjecture \ref{conjecture} is true, if $(r,p) \in \left(0,\frac{mp}{p-1}\right)\times (1,2]$, from H\"{o}lder's inequality we have
\[
\left( \sum\limits_{j_{1},\ldots,j_{m}=1}^{n}\left\vert
T(e_{j_{1}},\ldots,e_{j_{m}})\right\vert ^{r}\right) ^{\frac{1}{r}} \leq K_{m} n^{\frac{mp+r-pr}{pr}}\left\Vert T\right\Vert.
\]
On the other hand, if the above inequalities are valid for an exponent $s$ instead of $\frac{mp+r-pr}{pr}$, since $1 < p\leq 2$, from the Kahane--Salem--Zygmund inequality (Theorem \ref{kszi}) we have
\begin{equation*}
n^{\frac{m}{r}}\leq Cn^{s}n^{1-\frac{1}{p}}=Cn^{s+\frac{p-1}{p}}
\end{equation*}
and $$s\geq \frac{mp+r-pr}{pr}.$$ If $\left(r,p\right) \in \left[\frac{mp}{p-1},\infty \right)\times (1,2]$ we have $\frac{mp+r-pr}{pr}\leq 0$ and, in this case, the optimality of the exponent $\max \{\frac{mp+r-pr}{pr},0\}$ is immediate, since no negative exponent of $n$ is possible.

\bigskip

\noindent \textbf{Acknowledgement.} Part of this paper was written when D. Pellegrino was visiting Prof. F. Bayart at the Department of Mathematics of Universit\'{e} Blaise Pascal at Clermont Ferrand. D. Pellegrino thanks Prof. Bayart for important comments and thanks the R\'{e}sau Franco-Br\'{e}silian en Math\'{e}matiques for the financial support.

\end{document}